\newcounter{minutes}\setcounter{minutes}{\time}
\newcounter{hours}\setcounter{hours}{\time}
\newcommand{\R}{{\mathbb{R}}}
\newcommand{\B}{{\mathbb{B}}}
\newcommand{\Bn}{{\mathbb{B}^n}}
\newcommand{\Rn}{{\mathbb{R}^n}}
\newcommand{\Hn}{{\mathbb{H}^n}}
\date{}
\title[Local convexity properties of balls]{\bf  Local convexity properties of balls in Apollonian and Seittenranta's metrics}
\author{Riku Kl\'en}
\address{Department of Mathematics and Statistics, University of Turku, 20014 Turku, Finland}
\email{ripekl@utu.fi}
\newtheorem{theorem}[equation]{Theorem}
\newtheorem{lemma}[equation]{Lemma}
\newtheorem{proposition}[equation]{Proposition}
\newtheorem{example}[equation]{Example}
\newtheorem{corollary}[equation]{Corollary}
\newtheorem{remark}[equation]{Remark}
\newtheorem{openprob}[equation]{Open problem}
\numberwithin{equation}{section}
\begin{document}

%===============================================================================
\begin{abstract}
We consider local convexity properties of balls in the Apollonian and Seittenranta's metrics. Balls in the Apollonian metric are considered in the twice punctured space and starlike domains. Balls in Seittenranta's metric are considered in the twice punctured space and in the punctured ball.
\end{abstract}
%===============================================================================

\def\thefootnote{}
\footnotetext{
\texttt{\tiny File:~\jobname .tex,
          printed: \number\year-\number\month-\number\day,
          \thehours.\ifnum\theminutes<10{0}\fi\theminutes}
}
\makeatletter\def\thefootnote{\@arabic\c@footnote}\makeatother

\maketitle

{\small \sc Keywords.}{ Apollonian distance, Seittenranta's distance, metric ball, local convexity}

{\small \sc 2010 Mathematics Subject Classification.}{ 30C65, 51M10, 30F45}

%===============================================================================
\section{Introduction}

During the past few decades the hyperbolic and, more generally, the hyperbolic type distances have been studied by many authors in the context of metric spaces such as the Euclidean and Banach spaces \cite{Klén08a,Klén08b,Klén09,Klén10,KlénRasilaTalponen10,MartioVaisala11,RasilaTalponen12,Vaisala07,Vaisala09}. The purpose of this paper is to study the geometry of balls defined by two M\"obius invariant distances in the Euclidean space.

The first distance, the Apollonian distance was first introduced in \cite{Barbilian34} and later reintroduced in the context of the hyperbolic distance by A.F. Beardon \cite{Beardon98}. The Apollonian distance has recently been studied  as a metric \cite{Hasto03,Hasto04,Hasto07,Ibragimov03}, in connection with quasiconformal mappings \cite{GehringHag00}  and John domains \cite{WangHuangPonnusamyChu07}.

The second distance, Seittenranta's distance was introduced in 1999 by P. Seittenranta \cite{Seittenranta99} and it was based on the observations in \cite{Vuorinen88}. It has also been studied recently in \cite{Hasto04,Hasto07,HastoIbragimovLinden06}.

The cross-ratio $|a,b,c,d|$ for $a,b,c,d \in \overline{\Rn}$ is defined by
\[
  |a,b,c,d| = \frac{|a-c||b-d|}{|a-b||c-d|}.
\]
If $a = \infty$, $c = \infty$ or $d = \infty$ then we define $|\infty,b,c,d| = |b-d|/|c-d|$, $|a,b,\infty,d| = |b-d|/|a-b|$ and $|a,b,c,\infty| = |a-y|/|a-x|$.

Let $G$ be a proper subdomain of $\overline{\Rn}$. The Apollonian distance is defined for $x,y \in G$ by
\[
  \alpha_G(x,y) = \sup_{a,b \in \partial G} \log |a,x,y,b| = \sup_{a,b \in \partial G} \log \frac{|a-y||x-b|}{|a-x||y-b|}.
\]
Note that $\alpha_G$ is a metric if and only if $\partial G$ is not contained in a sphere in $\overline{\Rn}$, \cite[Theorem 1.1]{Beardon98}.

Seittenranta's distance is defined for $x,y \in G \subset \overline{\Rn}$ with $\textrm{card}\, \partial G \ge 2$ by
\[
  \delta_G(x,y) = \sup_{a,b \in \partial G} \log (1+|a,x,b,y|) = \sup_{a,b \in \partial G} \log \left( 1+\frac{|a-b||x-y|}{|a-x||y-b|} \right)
\]
and it is always a metric \cite[Theorem 3.3]{Seittenranta99}.

We shall study here local convexity properties, such as convexity and starlikeness, of the balls defined by the two distances. The question about convexity of hyperbolic type metric balls was posed by M. Vuorinen in 2007 \cite[8.1]{Vuorinen07}.

Our main results are the following theorems:
\begin{theorem}\label{mainthm}
Let $G \subsetneq \Rn$ be a domain such that the complement of $G$ is not contained in any $(n-1)$-dimensional sphere, $x \in G$ and $r > 0$.

(1) Let $x,y \in \Rn$, $x \neq y$, and $G = \Rn \setminus \{ x,y \}$. Then $B_\alpha (z,r)$ is not convex for any $z \in G$ and $r > 0$.

(2) If $G$ starlike with respect to $x$, then $B_\alpha(x,r)$ is strictly starlike with respect to $x$.
\end{theorem}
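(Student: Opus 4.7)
The plan is to dispatch the two parts differently: part (1) by an explicit computation of $\alpha_G$ in the twice punctured space, and part (2) by studying the radial behaviour of $\alpha_G(x,\cdot)$ in a starlike domain.

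For part (1), in $G=\Rn\setminus\{u,v\}$ the supremum defining $\alpha_G$ reduces to a maximum over four ordered pairs from $\{u,v\}$: the two diagonal ones give $\log 1 = 0$, and the two off-diagonal ones give quantities that are negatives of each other. Setting $\phi(p):=\log(|p-u|/|p-v|)$ one obtains $\alpha_G(p,q)=|\phi(q)-\phi(p)|$, so $B_\alpha(z,r)$ is the open region strictly between the two level sets $\{\phi=\phi(z)\pm r\}$. These level sets are Apollonius spheres for $(u,v)$: a nested family of Euclidean spheres (together with a hyperplane at $\phi=0$) that foliates $\Rn\setminus\{u,v\}$. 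Depending on the signs of $\phi(z)\pm r$, the region $B_\alpha(z,r)$ is either an annular shell between two Apollonius spheres surrounding the same puncture, or the complement of two disjoint Euclidean balls when the bounding spheres surround different punctures. In either case the region has a topological ``hole'', and two points of $B_\alpha(z,r)$ chosen on opposite sides of the hole give a straight segment passing through points not in the ball, proving non-convexity.

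For part (2), let $y\in B_\alpha(x,r)$, $t\in(0,1)$ and $z_t:=(1-t)x+ty$; since $G$ is starlike with respect to $x$, the segment $[x,y]$ lies in $G$, so $z_t\in G$. I plan to prove the stronger statement that $t\mapsto\alpha_G(x,x+t\theta)$ is strictly increasing on the (connected, by starlikeness of $G$) interval of $t\ge 0$ for which $x+t\theta\in G$, for every unit direction $\theta$. Strict monotonicity along every such ray means each ray from $x$ meets $\partial B_\alpha(x,r)$ at most once, which is strict starlikeness. The starting point is the multiplicative identity $|a,x,z_2,b|=|a,x,z_1,b|\cdot|a,z_1,z_2,b|$ valid when $z_1,z_2$ are collinear with $x$: using a ``ray pair'' $(a^*,b^*)\in\partial G\times\partial G$ consisting of the backward and forward exit points of the ray through $x$ in direction $\theta$, a direct computation gives $|a^*,z_1,z_2,b^*|>1$ for $t_1<t_2$, so $\log|a^*,x,\cdot,b^*|$ is strictly increasing along the ray and yields $\alpha_G(x,z_2)>\alpha_G(x,z_1)$ whenever the supremum at $z_1$ is attained by this pair.

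The main obstacle is that the supremum defining $\alpha_G(x,z_1)$ is in general attained at some other pair $(a,b)\in\partial G\times\partial G$ not lying on the ray through $x$, and for such a pair the incremental factor $|a,z_1,z_2,b|$ need not be $\ge 1$; hence one cannot simply reduce to the ray pair. A secondary technical issue is that when the ray through $x$ in direction $\theta$ fails to hit $\partial G$ in the forward or backward direction (for instance a horizontal ray in a half-space), the ray pair does not literally exist in $\partial G$ and must be replaced by a limit of admissible boundary points. The technical heart of the proof is therefore to combine these: either by a first-order optimality condition at a maximizing pair for $\alpha_G(x,z_1)$---using the starlikeness of $G$ to constrain the geometry of the optimizer---or by constructing, for each near-maximizing pair at $z_1$, an admissible replacement pair at $z_2$ whose cross-ratio strictly exceeds the supremum at $z_1$.
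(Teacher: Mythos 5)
Your part (1) follows the paper's own route. Writing $\phi(p)=\log(|p-u|/|p-v|)$, identifying $B_\alpha(z,r)$ as the region between two spheres of the Apollonian family for $(u,v)$ (a shell around one puncture, or the complement of two disjoint closed balls), and reading off non-convexity from the bounded complementary component is exactly the content of Theorem \ref{Apo:twopoints} and Remark \ref{Apo:remark}~(1), just phrased through the level sets of $\phi$ rather than through the explicit centres and radii from \eqref{Apo-formula}. Your reduction of the supremum to the four ordered pairs from $\{u,v\}$ also matches the paper's convention (the paper likewise discards the point at infinity for this domain), so part (1) is fine.

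Part (2) has a genuine gap, and it is the one you name yourself. Reducing strict starlikeness to strict monotonicity of $t\mapsto\alpha_G(x,x+t\theta)$ is the right target, and the factorization $|a^*,x,z_2,b^*|=|a^*,x,z_1,b^*|\,|a^*,z_1,z_2,b^*|$ with $|a^*,z_1,z_2,b^*|>1$ for the collinear exit pair is correct; but, as you note, it only bounds $\alpha_G(x,z_2)$ from below by $\log|a^*,x,z_1,b^*|$, which in general is strictly smaller than $\alpha_G(x,z_1)$, since the maximizing pair at $z_1$ need not lie on the ray and for an off-ray pair the incremental factor $|a,z_1,z_2,b|$ can be less than $1$. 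The two remedies you sketch (a first-order optimality condition at the maximizer, or a replacement-pair construction) are not carried out, and neither is routine: the pairwise comparison $|a,x,z_1,b|\le|a,x,z_2,b|$ genuinely fails for fixed $(a,b)$, so some global use of starlikeness is unavoidable. This is precisely where the paper brings in a different tool: it splits $\alpha_G(x,w)$ as in \eqref{ApollonianBallsAproach} into two terms governed by the maximal Apollonian balls $B_{x,w}^{r_{xw}}$ and $B_{w,x}^{r_{wx}}$ contained in $G$, and then uses the ``ice cream cone'' Lemma \ref{Apo:icecreamcone}, which describes $\bigcup_t B_{x,x+t(y-x)}^{\rho}$ as an angular cone with vertex $x$; starlikeness of $G$ with respect to $x$ interacts with this cone to exclude the configuration $\alpha_G(x,y)\ge r>\alpha_G(x,z)$ with $y\in(x,z)$ (Theorem \ref{Apo:starlike}). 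Your proposal contains no substitute for this step, so part (2) is not proved as it stands. A secondary, smaller point: strict starlikeness requires each half-line to meet $\partial B_\alpha(x,r)$ \emph{exactly} once, so even with monotonicity in hand you would still need to say why the ray actually reaches the boundary.
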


\begin{theorem}\label{mainthm2}

(1) Let $G = \Bn \setminus \{ 0 \}$, $x \in G$ and $r_0 = \log (1+1/(1-|x|))$. Then $B_\delta (x,r)$ is convex for all $r \in (r,r_0]$ and is not convex for $r > r_0$.

(2) Let $G = \Rn \setminus \{ x_1, \dots ,x_m \}$, $m \ge 2$ and $x_1 \neq x _2$, $x \in G$ and $r > 0$. Then $B_\alpha(x,r)$ is convex, if
  \[
    r \le \log \left( 1+\frac{\displaystyle \min_{i \neq j} \left\{ |x_i-x_j| \right\}}{\displaystyle \max_{i} \{ |x-x_i| \} } \right).
  \]
\end{theorem}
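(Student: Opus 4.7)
My plan is to decompose the supremum defining each distance according to the location of $(a,b) \in \partial G$ in $\overline{\Rn}$ and to analyze each resulting constraint as a concrete Euclidean region, so that convexity of the metric ball reduces to a question about the intersection of explicit sets. For Part (1), $\partial G = S^{n-1} \cup \{0\}$, and the pairs split into three families. (a) Both $a,b \in S^{n-1}$: the sup reduces to the Seittenranta distance of $\Bn$, which is known to equal the hyperbolic metric $\rho$, whose sublevel set is a Euclidean ball. (b) Pairs $(0,b)$ with $b \in S^{n-1}$: the sup over $b$ is attained at $b = y/|y|$ and yields
\[
R_A = \{y : |x-y| + (e^r-1)|x|\,|y| \leq (e^r-1)|x|\},
\]
the sublevel set of a sum of two convex functions of $y$, hence always convex. (c) Pairs $(a,0)$ with $a \in S^{n-1}$: the sup over $a$ is attained at $a = x/|x|$ and yields the Apollonian region $R_B = \{y : |x-y| \leq c|y|\}$ with $c = (e^r-1)(1-|x|)$. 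Elementary Apollonian-sphere geometry shows $R_B$ is a Euclidean ball for $c<1$, a half-space for $c=1$, and the complement of a Euclidean ball for $c>1$; the threshold $c=1$ is precisely $r = r_0$. Hence $B_\delta(x,r) = B_\rho(x,r) \cap R_A \cap R_B$ is an intersection of three convex sets when $r \leq r_0$, giving convexity.

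For $r > r_0$, the excluded set $R_B^c$ is a Euclidean ball on the far side of $0$ from $x$; I would witness non-convexity by selecting two points on the boundary sphere of $R_B^c$ placed symmetrically about the line through $0$ and $x$, close to the sphere's point nearest to $x$, so that their midpoint lies on that line and inside $R_B^c$ while the hyperbolic-ball and $R_A$ constraints continue to hold at the two points by continuity from the $r = r_0$ configuration. For Part (2), $\partial G = \{x_1,\dots,x_m,\infty\}$ yields three types of constraints: Euclidean balls $\{y : |y-x_i| \leq e^r|x-x_i|\}$ from pairs $(x_i,\infty)$; complements of Euclidean balls $\{y : |y-x_j| \geq |x-x_j|/e^r\}$ from pairs $(\infty,x_j)$; and Apollonian regions $\{y : |x_i-y|/|y-x_j| \leq k_{ij}\}$, with $k_{ij} = e^r|x_i-x|/|x-x_j|$, from pairs $(x_i,x_j)$ with $i \neq j$. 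Set $C := \bigcap_i \{y : |y-x_i| \leq e^r|x-x_i|\}$, a convex intersection of Euclidean balls containing $x$. The plan is to show that under the hypothesis every remaining constraint is implied on $C$, so that $B_\alpha(x,r) = C$ and convexity follows.

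The hypothesis $r \leq \log(1 + d/M)$, with $d = \min_{i \neq j}|x_i-x_j|$ and $M = \max_i|x-x_i|$, is equivalent to $(e^r-1)|x-x_k| \leq d$ for every $k$. For $y \in C$ the triangle inequality gives $|y-x| \leq (1+e^r)|x-x_k|$ for each $k$, and combined with $d \leq |x_i-x_j| \leq |x-x_i| + |x-x_j|$ these estimates are to be used to verify both $|x_i-y||x-x_j| \leq e^r|x-x_i||y-x_j|$ (the Apollonian inequality for $i \neq j$) and $|y-x_j| \geq |x-x_j|/e^r$ (the complement-of-ball inequality for each $j$). The main obstacle is this quantitative verification: the specific numerical form $\log(1 + d/M)$ is precisely tuned to close the triangle-inequality chase in the worst case, and a delicate case analysis depending on the relative sizes of $|x-x_i|$, $|x-x_j|$, $d$, and $M$ will likely be required to close the estimate uniformly.
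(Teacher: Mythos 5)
Your part (1) follows the paper's own route: Theorem \ref{Seit:puncturedball} writes $B_{\delta_G}(x,r)=A\cap B\cap C$ with $C$ the hyperbolic ball, $B$ equal to your $R_A$ and $A$ equal to your $R_B$, and it locates the failure of convexity for $r>r_0$ at the boundary point of the ball on the segment from $x$ to the origin. Your observation that $R_A$ is a sublevel set of the convex function $y\mapsto|x-y|+(e^r-1)|x|\,|y|$ is in fact cleaner than the paper's Lemma \ref{Seit:specialcase}, which proves convexity of that piece by a law-of-cosines computation in polar coordinates about $x$. Two details still need to be supplied in your non-convexity step: you must check, as the paper does, that at the witness point the constraints $R_A$ and the hyperbolic ball are \emph{strictly} slack (the paper verifies $r_A>r_B\iff|x|>|y|$ and $r_A>r_C\iff|y|<1$), since ``continuity from the $r=r_0$ configuration'' does not by itself rule out that one of the other constraints becomes active there; and your two witness points should be taken slightly inside the open ball rather than on its boundary sphere.

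Part (2) contains a genuine gap, though the root cause is a typo in the statement: as printed, with $B_\alpha$, the claim contradicts part (1) of Theorem \ref{mainthm} (for $m=2$ the Apollonian ball is never convex). The intended metric is $\delta$; the paper proves the $\delta$-version as Corollary \ref{Seit:Rn-finiteset} via Theorem \ref{ApoSeit:intersection} and Remark \ref{Seit:huomautus} (1), and the caption of Figure \ref{Seit:kuva3} confirms this. Your plan to show $B_\alpha(x,r)=C:=\bigcap_iB^n(x_i,e^r|x-x_i|)$ cannot be closed: the pairs $(\infty,x_j)$ force the removal of the balls $B^n(x_j,e^{-r}|x-x_j|)$, and these are not in general disjoint from $C$. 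Concretely, for $m=2$ with $x$ the midpoint of $[x_1,x_2]$ and $r=\log 3$ (the stated threshold), one has $|x_2-x_1|=d\le 3d/2=e^r|x-x_1|$, so $x_2$ lies in the interior of $C$ while a whole neighbourhood of $x_2$ is excluded from $B_\alpha(x,r)$; by Theorem \ref{Apo:twopoints} the ball is then the complement of two disjoint closed balls and hence not convex. No tuning of the triangle-inequality estimates can repair this, because the obstruction is topological. The correct argument proves the statement for $\delta$: for each pair $(x_i,x_j)$ one has $B_{\delta_{\Rn\setminus\{x_i,x_j\}}}(x,r)=\{y:|x-y|<c\,|y-x_j|\}\cap\{y:|x-y|<d\,|y-x_i|\}$ with $c=(e^r-1)|x-x_i|/|x_i-x_j|$ and $d=(e^r-1)|x-x_j|/|x_i-x_j|$; each factor is a Euclidean ball (hence convex) precisely when its parameter is at most $1$, i.e.\ when $r\le\log\left(1+|x_i-x_j|/\max\{|x-x_i|,|x-x_j|\}\right)$, and intersecting over all pairs via Theorem \ref{ApoSeit:intersection} gives the claim.
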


In this paper we will shortly introduce known results and some preliminaries in Section \ref{preliminary}. In Section \ref{apo} we concentrate on the Apollonian metric balls. We consider $B_\alpha(x,r)$ in the twice punctured space $\Rn \setminus \{ a,b \}$ and domains, which are starlike with respect to $x$. In Section \ref{seit} we study Seittenranta's metric balls in twice punctured space and in punctured unit ball $\B^n \setminus \{ 0 \}$.

%===============================================================================
\section{Preliminary results}\label{preliminary}

A domain $G \subsetneq \Rn$ is starlike with respect to $x \in G$ if for all $y \in G$ the line segment $[x,y]$ is contained in $G$ and $G$ is strictly starlike with respect to $x$ if each half-line from the point $x$ meets $\partial G$ at exactly one point. Clearly (strictly) convex domains are (strictly) starlike with respect to any point.

The cross-ratio is M\"obius invariant, which means that for each M\"obius transformation $f$ we have $|a,b,c,d| = |f(a),f(b),f(c),f(d)|$. Therefore $\alpha_G$ and $\delta_G$ are M\"obius invariant.

For a distance $d$ in $G$ we define the metric ball for $x \in G$ and $r>0$ by $B_d(x,r) = \{ y \in G \colon d(x,y) < r \}$. The Euclidean balls and spheres we denote $B^n(x,r)$ and $S^{n-1}(x,r)$, respectively. We denote the unit ball $B(0,1)$ by $\Bn$ and the upper half-space by $\Hn = \{ z \in \Rn  \colon z_n > 0 \}$. The hyperbolic distance in the unit ball $\Bn$ and in the upper half-space $\Hn$ are denoted by $\rho_\Bn$ and $\rho_\Hn$, respectively.

For $x,y \in \Rn$ and $r >0$ we define the Apollonian ball and sphere, respectively, to be
\[
  B_{x,y}^r = \{ z \in \Rn \colon r |x-z| < |y-z| \}, \quad S_{x,y}^r = \{ z \in \Rn \colon r |x-z| = |y-z| \}.
\]
For $x,y \in \Rn$ and $c > 0$, $c \neq 1$, we have \cite[Lemma 2.2.3]{Ibragimov02}
\begin{equation}\label{Apo-formula}
  S_{x,y}^c =S^{n-1} \left( \frac{y-c^2x}{1-c^2},\frac{c|x-y|}{|1-c^2|} \right).
\end{equation}
In the case $c = 1$ the Apollonian ball is a half-space.

Note that in the definition of $\alpha_G$ and $\delta_G$ we can replace the supremum by maximum, if we additionally allow that $a$ or $b$ may be infinity in the case of unbounded $G$.

\begin{proposition}\cite[Theorem 3.11]{Seittenranta99}
  Let $G \subsetneq \Rn$ be an open set. The for all $x,y \in G$ we have
  \[
    \alpha_G(x,y) \le \delta_G(x,y) \le \log \left( e^{\alpha_G(x,y)} +2 \right) \le \alpha_G(x,y) +\log 3.
  \]
\end{proposition}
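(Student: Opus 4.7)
The three inequalities compare the Apollonian quantity $|a,x,y,b| = \tfrac{|a-y||x-b|}{|a-x||y-b|}$ with Seittenranta's quantity $1+|a,x,b,y| = 1+\tfrac{|a-b||x-y|}{|a-x||y-b|}$ for the same $a,b \in \partial G$. The key observation is that the numerators of the two fractions, compared against the common denominator $|a-x||y-b|$, are exactly the three terms of \emph{Ptolemy's inequality}: for any four points $p_1,p_2,p_3,p_4 \in \Rn$,
\[
  |p_1-p_3|\,|p_2-p_4| \;\le\; |p_1-p_2|\,|p_3-p_4| + |p_1-p_4|\,|p_2-p_3|.
\]
So the whole proof reduces to applying Ptolemy twice and a one-line estimate, once I have ensured $\alpha_G(x,y)\ge 0$.

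For the first inequality $\alpha_G(x,y)\le\delta_G(x,y)$, I would fix $a,b\in\partial G$ and apply Ptolemy to the ordered quadruple $(a,x,y,b)$, obtaining
\[
  |a-y|\,|x-b| \;\le\; |a-x|\,|y-b| + |a-b|\,|x-y|.
\]
Dividing by $|a-x|\,|y-b|$ yields $|a,x,y,b| \le 1+|a,x,b,y|$; taking logarithms and then the supremum over $a,b\in\partial G$ gives the claim.

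For the middle inequality $\delta_G(x,y)\le \log(e^{\alpha_G(x,y)}+2)$, I would apply Ptolemy to the \emph{reordered} quadruple $(a,x,b,y)$, which gives
\[
  |a-b|\,|x-y| \;\le\; |a-x|\,|y-b| + |a-y|\,|x-b|,
\]
hence $|a,x,b,y| \le 1+|a,x,y,b|$. For each fixed $a,b$ the right side is bounded by $1+e^{\alpha_G(x,y)}$ since $\log|a,x,y,b|\le \alpha_G(x,y)$. Therefore $1+|a,x,b,y|\le 2+e^{\alpha_G(x,y)}$; taking logarithms and supremum over $a,b$ yields the inequality. Note that the symmetric roles of the two orderings of the cross-ratio is really what makes both Ptolemy applications available.

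The last inequality $\log(e^{\alpha_G(x,y)}+2)\le \alpha_G(x,y)+\log 3$ is equivalent to $e^{\alpha_G(x,y)}+2 \le 3e^{\alpha_G(x,y)}$, i.e., to $\alpha_G(x,y)\ge 0$. Since $\partial G\neq\emptyset$, I can choose $a=b\in\partial G$ in the definition of $\alpha_G$; then $|a,x,y,a|=1$ and so $\alpha_G(x,y)\ge \log 1 = 0$. (Alternatively, swapping $a\leftrightarrow b$ inverts the cross-ratio, so the supremum of $\log|a,x,y,b|$ cannot be negative.) The only genuine ingredient is Ptolemy's inequality, so there is no real obstacle; the subtlety lies only in choosing the correct ordering of arguments in each of the two Ptolemy applications so that the common denominator $|a-x||y-b|$ survives.
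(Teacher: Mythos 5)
Your proof is correct and complete: both applications of Ptolemy's inequality are set up with the ordering that leaves the common denominator $|a-x||y-b|$ intact, and the nonnegativity of $\alpha_G$ via the choice $a=b$ (or the swap $a\leftrightarrow b$) settles the last inequality. The paper itself gives no proof of this proposition --- it simply cites Seittenranta's Theorem 3.11 --- and your Ptolemy-based argument is the standard, self-contained route to the result. The only point worth adding is that for unbounded $G$ the supremum runs over $\partial G \subset \overline{\Rn}$, so $a$ or $b$ may be $\infty$; in that case the required inequalities degenerate to instances of the triangle inequality (e.g.\ $|a-y|\le |a-x|+|x-y|$ when $b=\infty$), so nothing is lost.
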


From the definition it is also easy to verify that the Apollonian distance is monotone with respect to the domain, i.e. for all $x,y \in G' \subset G$ we have
\begin{equation}\label{Apo:inclusion}
  \alpha_{G}(x,y) \le \alpha_{G'}(x,y).
\end{equation}
%To be precise, we have $\alpha_G (x,y) = \alpha_D (x,y)$ for $D = B_{x,y}^{r_x} \cup B_{y,x}^{r_y}$ and $\alpha_{G}(x,y) < \alpha_G'(x,y)$ if and only if $G' \cap D \neq \emptyset$.

The following proposition shows that Seittenranta's distance is also monotone with respect to the domain.

\begin{proposition}\cite[Remark 3.2 (2)]{Seittenranta99}
  Let $G \subsetneq \Rn$ and $G' \subset G$ be domains. Seittenranta's distance is monotone with respect to the domain, \emph{i.e.} for all $x,y \in G'$ we have
  \begin{equation}\label{Seit:inclusion}
    \delta_{G}(x,y) \le \delta_{G'}(x,y).
  \end{equation}
\end{proposition}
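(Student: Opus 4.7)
The plan is first to show the reformulation
\[
\delta_H(x,y)=\sup_{a,b\in\overline{\Rn}\setminus H}\log\bigl(1+|a,x,b,y|\bigr),
\]
valid for any admissible domain $H$ and $x,y\in H$. Once this is established, the monotonicity is immediate: the hypothesis $G'\subset G$ gives $\overline{\Rn}\setminus G\subseteq\overline{\Rn}\setminus G'$, and hence
\[
\delta_G(x,y)=\sup_{a,b\in\overline{\Rn}\setminus G}\log\bigl(1+|a,x,b,y|\bigr)\le\sup_{a,b\in\overline{\Rn}\setminus G'}\log\bigl(1+|a,x,b,y|\bigr)=\delta_{G'}(x,y).
\]

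To prove the reformulation, fix $x,y\in H$ and $b\in\overline{\Rn}\setminus H$, and consider
\[
\phi(a)=|a,x,b,y|=\frac{|x-y|}{|b-y|}\cdot\frac{|a-b|}{|a-x|}
\]
as a function of $a\in\overline{\Rn}\setminus\{x\}$. By formula (\ref{Apo-formula}), the level sets of $\phi$ are the Apollonian spheres $S_{x,b}^{c}$; these foliate $\overline{\Rn}\setminus\{x,b\}$, and on each such sphere the $\phi$-value is strictly larger on the side containing $x$. Hence $\phi$ admits no local maximum in $\overline{\Rn}\setminus\{x\}$, and by compactness its maximum over the closed set $\overline{\Rn}\setminus H\subseteq\overline{\Rn}\setminus\{x\}$ must be attained on the topological boundary $\partial(\overline{\Rn}\setminus H)=\partial H$. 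Repeating the argument in the $b$-variable (with $a$ now fixed in $\partial H$) and using the trivial equality $\sup_a\sup_b=\sup_b\sup_a$ yields the desired reformulation.

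The main technical point is the behaviour at $\infty$ when $\overline{\Rn}\setminus H$ is unbounded. The function $\phi$ extends continuously to $\overline{\Rn}$ with $\phi(\infty)=|x-y|/|b-y|$; and since $|a-b|/|a-x|>1$ for $a$ of large norm in the direction from $b$ to $x$, the point $\infty$ is not a local maximum of $\phi$ either, so the maximum-principle step goes through without modification. Everything else reduces to bookkeeping with the explicit Apollonian-sphere description in (\ref{Apo-formula}).
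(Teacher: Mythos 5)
Your proof is correct. Note that the paper itself offers no argument for this proposition --- it is quoted from \cite[Remark 3.2 (2)]{Seittenranta99} --- and the reformulation you prove (that the supremum defining $\delta_H$ may be taken over all of $\overline{\Rn}\setminus H$ rather than just $\partial H$, since for fixed $b$ the map $a\mapsto|a-b|/|a-x|$ has Apollonian spheres as level sets and no local maximum away from $x$, so its maximum on the compact set $\overline{\Rn}\setminus H$ lies on $\partial(\overline{\Rn}\setminus H)=\partial H$) is exactly the content of that remark, from which monotonicity follows at once because $\overline{\Rn}\setminus G\subseteq\overline{\Rn}\setminus G'$. This is the essential step: one cannot argue directly from $\partial G$ versus $\partial G'$, since points of $\partial G$ need not lie on $\partial G'$ but only in the complement of $G'$, so your detour through the complement is not optional bookkeeping but the heart of the matter, and you have handled the one delicate point (the behaviour at $\infty$ when the complement is unbounded) correctly.
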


We introduce next a result that can be used to estimate metric balls $B_\alpha$ and $B_\delta$.

\begin{theorem}\label{ApoSeit:intersection}
  Let $G \subsetneq \Rn$, $\partial G \subset \overline{\Rn}$, $\textnormal{card} \, \partial G \ge 2$, $x \in G$ and $r>0$. Then for $m \in \{ \alpha , \delta \}$
  \[
    B_{m_G} (x,r) = \bigcap_{a,b \in \partial G \atop a \neq b} B_{m_{\Rn \setminus \{ a,b \}}} (x,r).
  \]
\end{theorem}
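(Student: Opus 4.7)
The claim is an equality of two subsets of $G$, so my plan is to establish the two inclusions separately. The required tools are already in place: the domain monotonicity estimates \eqref{Apo:inclusion} and \eqref{Seit:inclusion} handle one direction, while the remark that the supremum in the definition of $m_G$ is in fact a maximum handles the other.

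For the inclusion ``$\subset$'', I would apply monotonicity directly. For any $a,b \in \partial G$ with $a \ne b$, we have $a,b \notin G$, hence $G \subset \Rn \setminus \{a,b\}$. Monotonicity then yields $m_{\Rn \setminus \{a,b\}}(x,y) \le m_G(x,y) < r$ for every $y \in B_{m_G}(x,r)$, so $y$ lies in each $B_{m_{\Rn \setminus \{a,b\}}}(x,r)$, and intersecting over all admissible pairs gives the inclusion.

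For the reverse inclusion ``$\supset$'', let $y$ belong to the intersection. The case $y = x$ is trivial, so assume $m_G(x,y) > 0$. By the remark preceding the theorem, the supremum defining $m_G(x,y)$ is attained at some pair $a^*, b^* \in \partial G$ (with $\infty$ permitted in the unbounded case). The cross-ratio expressions $\log|a,x,y,b|$ and $\log(1+|a,x,b,y|)$ both vanish when $a=b$, so a strictly positive maximum is attained off the diagonal: $a^* \ne b^*$. Now, $m_{\Rn \setminus \{a^*,b^*\}}(x,y)$ is itself a maximum of the same cross-ratio expression, taken over pairs drawn from $\{a^*,b^*\}$, hence it dominates the single value attained at $(a^*,b^*)$, which equals $m_G(x,y)$. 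Combining this with the assumption that $y$ lies in $B_{m_{\Rn \setminus \{a^*,b^*\}}}(x,r)$ produces $m_G(x,y) < r$, completing the inclusion.

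I do not foresee any serious obstacle; the whole argument is two lines once the right tools are identified. The only point that requires a little care is the possibility $a^*=\infty$ or $b^*=\infty$ for unbounded $G$; this is consistent with the index set $\{a,b \in \partial G : a \ne b\}$ of the intersection once $\partial G$ is understood in $\overline{\Rn}$, and the cross-ratio conventions recalled at the beginning of the paper make the single-punctured versions $m_{\Rn \setminus \{a^*\}}$ behave exactly as the doubly-punctured ones do in the argument above.
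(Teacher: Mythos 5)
Your argument is essentially the paper's own proof: domain monotonicity \eqref{Apo:inclusion}--\eqref{Seit:inclusion} gives the inclusion ``$\subset$'', and attainment of the defining supremum at some boundary pair $(a^*,b^*)$ gives ``$\supset$''; the paper merely packages the same two steps as the distance identity $m_G(x,y)=\sup_{a\neq b} m_{\Rn\setminus\{a,b\}}(x,y)$ instead of as two set inclusions. Your handling of the degenerate diagonal case $a=b$ and of the possibility $a^*=\infty$ is in fact slightly more explicit than what the paper writes.
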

\begin{proof}
%  Let us denote
%  \[
%    A = B_{m_G} (x,r) \textrm{ and } B=\bigcap_{a,b \in \partial G \atop a \neq b} B_{m_{\Rn \setminus \{ a,b \}}} (x,r).
%  \]
%
%  We prove first the relation $A \subset B$. Let $y \in A$. Since we allow $a$ or $b$ to be infinity, we could replace $\sup$
% by $\max$ in the definition of $\alpha_G$ and $\delta_G$. Therefore, there exists $a,b \in \partial G$ such that
%  \[
%    |a,x,y,b| = \max_{a',b' \in \partial G \atop a' \neq b'} |a',x,y,b'|
%  \]
%  in the case $m = \alpha$ and
%  \[
%    |a,x,b,y| = \max_{a',b' \in \partial G \atop a' \neq b'} |a',x,b',y|
%  \]
%  in the case $m = \delta$. By the selection of $a$ and $b$ we have $m_{\Rn \setminus \{ a',b' \}}(x,y) \le m_G(x,y)$ and $y \in B$ implying $A \subset B$.
%
%  We prove then the relation $B \subset A$. Let $y \in B$ and choose $a,b \in \partial G$ such that
%  \[
%    m_{\Rn \setminus \{ a,b \}}(x,y) = \min_{a',b' \in \partial G \atop a' \neq b'} m_{\Rn \setminus \{ a',b' \}}(x,y).
%  \]
%  Now $\Rn \setminus \{ a,b \} \subset G$ and by \eqref{Apo:inclusion} and \eqref{Seit:inclusion} we have $m_G(x,y) \le m_{\Rn \setminus \{ a,b \}}(x,y)$. Thus $y \in A$ and $B \subset A$.

  We show that for all $x,y \in G$
  \begin{equation}\label{newformula}
    m_G(x,y) = \sup_{a,b \in \partial G, \, a \neq b} m_{\Rn \setminus \{ a,b \}}(x,y).
  \end{equation}
  Because $G \subset \Rn \setminus \{ a,b \}$ for all $a,b \in \partial G$ by \eqref{Apo:inclusion} and \eqref{Seit:inclusion} we have
  \[
    m_G(x,y) \ge m_{\Rn \setminus \{ a,b \}}(x,y)
  \]
  and thus
  \[
    m_G(x,y) \ge \sup_{a,b \in \partial G, \, a \neq b} m_{\Rn \setminus \{ a,b \}}(x,y).
  \]

  On the other hand, for some $a,b \in \partial G$ with $a \neq b$ we have
  \[
    m_G(x,y) = m_{\Rn \setminus \{ a,b \}}(x,y)
  \]
  and \eqref{newformula} holds.
\end{proof}

Let us fix two distinct points $x,y \in \Rn$ and a radius $r > 1$. Then the union of the Apollonian balls $B_{x,z}^r$ for $z \in [x,y]$ form an ''ice cream cone''. This observation is stated formally in the following lemma.

\begin{lemma}\label{Apo:icecreamcone}
  Let $x,y \in \Rn$ with $x \neq y$ and $r \in (0,1)$. Then
  \[
    \bigcup_{t \in (0,1]} B_{x,z}^r = A \cup B_{x,y}^r,
  \]
  where $z = x+t(y-x)$ and
  \[
    A = \left\{ a \in \Rn \colon \measuredangle (a,x,y) < \arcsin r, |a-x| < \frac{|x-y|}{\sqrt{1-r^2}} \right\}.
  \]
\end{lemma}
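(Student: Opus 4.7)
The plan is to use \eqref{Apo-formula} to realize each Apollonian ball $B_{x,z}^r$ as a Euclidean ball and then describe the envelope of the resulting one-parameter family. By a translation and a rotation I may assume $x=0$ and $y=d e_1$ with $d=|x-y|$, so that $z=td e_1$ for $t\in(0,1]$; both sides of the claimed equality are invariant under the orthogonal group fixing the line through $x$ and $y$, so nothing is lost in this normalization.

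By \eqref{Apo-formula}, $S_{x,z}^r$ is the Euclidean sphere with centre $c_t=\frac{td}{1-r^2}e_1$ and radius $R_t=\frac{rtd}{1-r^2}$. The decisive observation is that $R_t/|c_t|=r$ is independent of $t$, so every sphere in the family $\{S_{x,z}^r\}_{t\in(0,1]}$ is tangent to one and the same cone from $x$ of half-angle $\arcsin r$ whose axis is the ray $[x,y)$. The tangent circle on the $t$-th sphere sits at distance $\sqrt{|c_t|^2-R_t^2}=td/\sqrt{1-r^2}$ from $x$, and at $t=1$ this is exactly the outer radius appearing in the definition of $A$.

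The inclusion $\supseteq$ is then routine: $B_{x,y}^r$ is the ball at $t=1$, and for $a\in A$ written in polar form $(\mu,\theta)$ around $x$ with $\theta<\arcsin r$ and $\mu<d/\sqrt{1-r^2}$, a suitable $t\in(0,1]$ can be found because the radial extent of $B_{x,td e_1}^r$ along the ray at angle $\theta$ scales linearly in $t$ and at $t=1$ already reaches past $\mu$.

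For $\subseteq$, take $t\in(0,1]$ and $p\in B_{x,z}^r$; the tangent-cone observation forces $\measuredangle(p,x,y)\le\arcsin r$ automatically, so if $|p-x|\le d/\sqrt{1-r^2}$ then $p\in A$. Otherwise $p$ lies beyond the tangent circle of $B_{x,y}^r$, and the same linear-in-$t$ scaling property, read in the opposite direction, shows that along the ray through $p$ the radial extent of $B_{x,z}^r$ is contained in that of $B_{x,y}^r$, placing $p$ inside $B_{x,y}^r$. Keeping this ray-by-ray comparison consistent as $\theta$ varies in $[0,\arcsin r)$ and $t$ in $(0,1]$ is the main technical obstacle.
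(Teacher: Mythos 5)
Your argument is essentially the paper's: both use \eqref{Apo-formula} to realize the family $B_{x,z}^r$ as Euclidean balls with the constant ratio $R_t/|c_t|=r$, hence all tangent to the cone of half-angle $\arcsin r$ with vertex $x$ and axis $[x,y)$, and both obtain the cutoff $|x-y|/\sqrt{1-r^2}$ from the Pythagorean theorem applied to the ball at $t=1$. One small caution: your claim that for $t<1$ the radial extent of $B_{x,z}^r$ along a ray at angle $\theta$ is \emph{contained} in that of $B_{x,y}^r$ is literally false, since the inner endpoint $t\mu_-(\theta)$ moves toward $x$ as $t$ decreases; what you actually need (and what closes the deferred ``technical obstacle'') is only that the outer endpoints satisfy $t\mu_+(\theta)\le\mu_+(\theta)$ together with the monotonicity check that $\mu_-(\theta)\le |x-y|/\sqrt{1-r^2}$ for all $\theta<\arcsin r$, which is a one-line computation.
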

\begin{proof}
  We show that
  \[
    \bigcup_{s > 0} B_{x,x+s(y-x)}^r = \left\{ a \in \Rn \colon \measuredangle (a,x,y) < \arcsin r \right\}.
  \]
  For $b \in \Rn$ and $c > 0$ such that $B^n(b,c) = B_{x,x+s(y-x)}^r$ we show that the ratio of $c$ and $|x-b|$ is a constant. By  \eqref{Apo-formula}
  \[
    \frac{c}{|x-b|} = \frac{r|x-(x+s(y-x))|}{|1-r^2||x-b|} = \frac{sr|x-y|}{|1-r^2||x-b|} = \frac{r s |x-y|}{|x(1-r^2)-x-s(y-x)+r^2x|} = r.
  \]
  Thus the union of the Apollonian balls $B_{x,x+s(y-x)}^r$ is an angular domain with $\measuredangle (a,x,y) < \arcsin r$.

  By the Pythagorean theorem and \eqref{Apo-formula}
  \[
    |x-b| \le \sqrt{ \left( x-\frac{y-r^2 x}{1-r^2} \right)^2 + \left( \frac{r|x-y|}{|1-r^2|} \right)^2 } = \frac{|x-y|\sqrt{1-r^2}}{|1-r^2|} = \frac{|x-y|}{\sqrt{1-r^2}}
  \]
  and the assertion follows.
\end{proof}

\begin{example}\label{knownresults}
  (1) By \cite[Lemma 3.1]{Beardon98} we have $\alpha_\Hn = \rho_\Hn$ and since both distances are M\"obius invariant the distances agree in all domains which can be obtained from $\Hn$ by a M\"obius transformation. Especially, we have $\alpha_\Bn = \rho_\Bn$. By \cite[Lemma 8.39]{Vuorinen88} we have $\delta_\Bn = \rho_\Bn$ and by M\"obius invariance it is clear that $\alpha_G = \delta_G$ for $G=f(\Bn)$, where $f$ is a M\"obius transformation. Especially $\alpha_G = \delta_G = \rho_G$ for $G \in \{ \Bn,\Hn \}$. By \cite[(2.11)]{Vuorinen88}
  \[
    B_\alpha(x,r) = B_\delta(x,r) = B^n \left( x+x_n e_n (\cosh r -1),x_n \sinh r \right)
  \]
  for all $x \in \Hn$, $r > 0$ and by \cite[(2.22)]{Vuorinen88}
  \[
    B_\alpha(x,r) = B_\delta(x,r) = B^n \left( \frac{x(1-\tanh^2 \frac{r}{2})}{1-|x|^2 \tanh^2\frac{r}{2}},\frac{(1-|x|^2)\tanh \frac{r}{2}}{1-|x|^2 \tanh^2\frac{r}{2}} \right)
  \]
  for all $x \in \Bn$, $r > 0$.

  (2) Let $G = \Rn \setminus \{ a \}$, where $a \in \Rn$. Then by \cite[Remark 3.2 (3)]{Seittenranta99} $\delta_G = j_G$, where $j_G$ is the distance-ratio metric defined by
    \[
      j_G(x,y) = \log \left( 1+\frac{|x-y|}{\min \{ d(x,\partial G),d(y,\partial G) \} } \right) = \log \left( 1+\frac{|x-y|}{\min \{ |x-a|,|y-a| \} } \right)
    \]
    for all $x,y \in G$. By \cite[proof of Theorem 3.1]{Klén08a} and \eqref{Apo-formula} we have for all $x \in \Rn \setminus \{a\}$
  \[
    B_\delta(x,r) = \left\{ \begin{array}{ll} B^n(x,(e^r-1)|x-a|) \cap B_{x,a}^{1/(e^r-1)}, & \textrm{for } 0 < r \le \log 2, \\ B^n(x,(e^r-1)|x-a|) \setminus \overline{B}_{x,a}^{1/(e^r-1)}, & \textrm{for }r > \log 2. \end{array} \right.
  \]
\end{example}

By Example \ref{knownresults}, \cite[Theorem 3.1]{Klén08a} and \cite[Theorem 3.4]{Klén08a} we collect the following result.
\begin{proposition}
(1) Let $G \in \{ \Bn,\Hn \}$ and $x \in G$. Then $B_\alpha(x,r)$ and $B_\delta(x,r)$ are strictly convex for all $r > 0$.

(2) Let $a \in \Rn$,  $G=\Rn \setminus \{ a \}$, $x \in G$, $r_c = \log 2$ and $r_s = \log (1+\sqrt{2})$. Then $B_\delta(x,r)$ is (strictly) convex for $r \in (0,r_c]$ ($r \in (0,r_c)$) and (strictly) starlike with respect to $x$ for $r \in (0,r_s]$ ($r \in (0,r_s)$).
\end{proposition}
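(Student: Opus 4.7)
The plan is that this proposition is a collation of previously established facts: the explicit formulas in Example~\ref{knownresults} together with two cited theorems from the author's earlier paper. No substantively new argument is required; the task is to match each claim to its source and check the endpoint behaviour.

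For part~(1), I would simply invoke Example~\ref{knownresults}(1). It identifies $\alpha_G$ and $\delta_G$ with the hyperbolic metric $\rho_G$ for $G \in \{\Bn,\Hn\}$, and presents both $B_\alpha(x,r)$ and $B_\delta(x,r)$ as Euclidean balls $B^n(c,R)$ with centers and radii written out explicitly (involving $\cosh r$, $\sinh r$, or $\tanh(r/2)$). Since Euclidean balls are strictly convex, the assertion follows at once.

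For part~(2), the key observation is the identification $\delta_G = j_G$ on $G = \Rn \setminus \{a\}$ recorded in Example~\ref{knownresults}(2), where $j_G$ is the distance-ratio metric. Consequently $B_\delta(x,r) = B_j(x,r)$, so the convexity with threshold $r_c = \log 2$ (strict for $r < r_c$) and the starlikeness with threshold $r_s = \log(1+\sqrt{2})$ (strict for $r < r_s$) are precisely Theorem~3.1 and Theorem~3.4 of the cited reference. One could alternatively re-derive these from the explicit description of $B_\delta(x,r)$ in Example~\ref{knownresults}(2): for $r \le \log 2$ the ball is an intersection of two Euclidean balls and hence convex; for $\log 2 < r \le \log(1+\sqrt{2})$ the set is a Euclidean ball with a closed Apollonian ball excised, which destroys convexity but, as a short boundary inspection confirms, retains strict starlikeness with respect to $x$.

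The only point requiring mild care is the strict-versus-non-strict behaviour at the endpoints $r = r_c$ and $r = r_s$: one must verify that the property in question survives at the endpoint but fails just beyond it. This matches exactly the formulation of the cited theorems, so no further work is needed and there is no substantial obstacle.
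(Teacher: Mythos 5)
Your proposal matches the paper's treatment exactly: the paper offers no separate proof but simply ``collects'' the proposition from Example \ref{knownresults} (the explicit Euclidean-ball formulas for $B_\alpha=B_\delta$ in $\Bn$ and $\Hn$, and the identification $\delta_{\Rn\setminus\{a\}}=j_{\Rn\setminus\{a\}}$) together with Theorems 3.1 and 3.4 of the cited reference. Your sourcing of each claim and your remark on the endpoint behaviour are both correct, so nothing further is needed.
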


%===============================================================================
\section{Balls in the Apollonian metric}\label{apo}

By the definition we have
\begin{equation}\label{ApollonianBallsAproach}
  \alpha_G(x,y) = \sup_{a \in \partial G} \log \frac{|a-x|}{|a-y|} + \sup_{b \in \partial G} \log \frac{|b-x|}{|b-y|}, 
\end{equation}
which geometrically means that maximal Apollonian balls $B_{x,y}^{r_{xy}} , B_{y,x}^{r_{yx}} \subset G$ determine the Apollonian distance $\alpha_G(x,y) = \log (r_{xy} r_{yx})$.

The supremum in the definition of $\alpha$ is obtained only when $G$ is contained in a half-space $H$ and there exists $G' \subset G$ such that $G' \subset \partial H$ and $\textrm{diam}\, G' = \infty$.

We consider next Apollonian distance in the domain $G = \Rn \setminus \{ -e_1,e_1 \}$. Note that $\alpha_G$ is not a metric in this domain. Especially, for $x \in G$ and $a = |x+e_1|/|x-e_1|$ we have $\alpha_G(x,y) = 0$ for
\begin{equation}\label{Apo=0}
%  y = \frac{1+a^2}{a^2-1}+\frac{2a}{|a^2-1|}e^{i \alpha}, \quad a=\frac{|x+e_1|}{|x-e_1|}, \quad \alpha \in [0,2\pi].
  y \in S_{e_1,-e_1}^a, \textrm{ if } |x-e_1| \le |x+e_1|, \textrm{ and } y \in S_{-e_1,e_1}^{1/a}, \textrm{ if } |x-e_1| \ge |x+e_1|.
\end{equation}

\begin{theorem}\label{Apo:twopoints}
  Let $G = \Rn \setminus \{ -e_1,e_1 \}$, $x \in G$ and $r > 0$. We denote
  \[
%    B_c = B^n \left( e_1 \frac{1+c^2}{c^2-1},\frac{2c}{|c^2-1|} \right), \quad B_d = B^n \left( e_1 \frac{1+d^2}{1-d^2},\frac{2d}{|1-d^2|} \right)
    B_c = B_{e_1,-e_1}^c, \quad B_d =B_{-e_1,e_1}^d
  \]
  for $c = e^r |x+e_1|/|x-e_1|$ and $d = e^r |x-e_1|/|x+e_1|$. Then
  \[
    B_\alpha (x,r) = \left\{ \begin{array}{ll}B_c \setminus \overline{B_d}, & \textrm{if } c<1 \textrm{ and } d \ge 1,\\ \Rn \setminus ( \overline{B_c} \cup \overline{B_d} ), & \textrm{if } c>1 \textrm{ and } d > 1,\\ B_d \setminus \overline{B_c}, & \textrm{if } c \ge 1 \textrm{ and } d < 1. \end{array} \right.
  \]
  Moreover, the complement of $B_\alpha(x,r)$ is always disconnected.
\end{theorem}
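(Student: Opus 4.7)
The plan is to unwrap the supremum defining $\alpha_G(x,y)$, translate the inequality $\alpha_G(x,y)<r$ into two Apollonian-ball conditions, and then read off the three claimed cases from the relative positions of $c$, $d$ and $1$.

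Since $\partial G=\{-e_1,e_1\}$ has only two elements, the pairs $(a,b)\in\partial G\times\partial G$ contribute only three distinct values to the supremum in the definition of $\alpha_G(x,y)$: zero when $a=b$, and a pair of reciprocals coming from $(a,b)=(e_1,-e_1)$ and $(a,b)=(-e_1,e_1)$. Consequently $\alpha_G(x,y)=|u(x,y)|$, where
\[
  u(x,y)=\log\frac{|y-e_1|\,|x+e_1|}{|x-e_1|\,|y+e_1|}.
\]
Hence $\alpha_G(x,y)<r$ is equivalent to $-r<u(x,y)<r$, which after clearing denominators becomes the pair of inequalities $|y-e_1|<d\,|y+e_1|$ and $|y+e_1|<c\,|y-e_1|$.

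Next I would interpret each inequality as an Apollonian-ball non-membership. Directly from the definitions of $B_c$ and $B_d$, the first reads $y\notin\overline{B_d}$ and the second $y\notin\overline{B_c}$, so
\[
  B_\alpha(x,r)=(\Rn\setminus\overline{B_c})\cap(\Rn\setminus\overline{B_d}).
\]
The Euclidean shape of this intersection depends on whether $c,d$ exceed $1$: by formula \eqref{Apo-formula}, $B_c$ is a bounded Euclidean ball containing $e_1$ when $c>1$ and the Euclidean exterior of such a ball when $c<1$, with the analogous statement for $B_d$ about $-e_1$. Because $cd=e^{2r}>1$, at most one of $c,d$ can be strictly less than $1$, which yields precisely the three mutually exclusive configurations of the theorem. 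In case (2), both closed sets being removed are bounded, so the intersection is immediately $\Rn\setminus(\overline{B_c}\cup\overline{B_d})$. In cases (1) and (3), where one Apollonian ball is Euclidean-unbounded, I would use the complement identity $\Rn\setminus\overline{B_{x,y}^s}=B_{y,x}^{1/s}$ (which is immediate from the definition) to repackage the bounded complement as a single Apollonian ball in the theorem's notation, with the small closed ball sitting inside it subtracted off.

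For the disconnectedness of the complement, the Apollonian spheres $\partial B_c$ and $\partial B_d$ are level sets of the continuous function $z\mapsto|z-e_1|/|z+e_1|$ at the values $1/c$ and $d$ respectively, and $1/c<d$ holds because $cd=e^{2r}>1$. Therefore $\overline{B_c}\cap\overline{B_d}=\emptyset$, and $\Rn\setminus B_\alpha(x,r)=\overline{B_c}\cup\overline{B_d}$ decomposes as a disjoint union of two nonempty closed sets, one containing $e_1$ and the other containing $-e_1$. The main obstacle I anticipate is the bookkeeping in cases (1) and (3): the same geometric region (a bounded Euclidean ball with a smaller closed Euclidean ball removed) must be repackaged into the specific $B_c\setminus\overline{B_d}$ or $B_d\setminus\overline{B_c}$ format, which forces a careful application of the complement identity together with \eqref{Apo-formula} to align the set operations with the theorem's definitions of $B_c$ and $B_d$.
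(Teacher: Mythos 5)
Your reduction of $\alpha_G$ to the single quantity $u(x,y)$ and the resulting uniform identity $B_\alpha(x,r)=\Rn\setminus(\overline{B_c}\cup\overline{B_d})$ are correct, and your route is genuinely different from (and in places cleaner than) the paper's: the paper instead characterizes $\partial B_\alpha(x,r)$ as a subset of the two Apollonian spheres $S_c=S_{e_1,-e_1}^c$ and $S_d=S_{-e_1,e_1}^d$, observes that the level set $S_{e_1,-e_1}^a$ through $x$ (with $a=|x+e_1|/|x-e_1|$, on which $\alpha_G(x,\cdot)=0$) lies inside the ball, and then decides by comparing centres and radii via \eqref{Apo-formula} which side of each sphere the ball occupies. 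Your disconnectedness argument (the complement is the union of the two disjoint nonempty closed sets $\overline{B_c}\ni e_1$ and $\overline{B_d}\ni -e_1$) is also tighter than the paper's.

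The step you defer as ``bookkeeping'' is, however, exactly where the argument does not close as written. Put $F(z)=|z+e_1|/|z-e_1|$; then $B_c=\{F>c\}$, $B_d=\{F<1/d\}$, and your uniform formula is the shell $\{1/d<F<c\}$, which always contains $x$ because $F(x)=ce^{-r}=e^r/d$. In case (1), where $c<1$, the complement identity gives $\Rn\setminus\overline{B_c}=B_{-e_1,e_1}^{1/c}=\{F<c\}$, so the repackaged set is $B_{-e_1,e_1}^{1/c}\setminus\overline{B_d}$ --- \emph{not} $B_c\setminus\overline{B_d}$, which equals $\{F>c\}$ and does not even contain $x$. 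The sets $B_{e_1,-e_1}^{c}$ and $B_{-e_1,e_1}^{1/c}$ are the two complementary sides of the same sphere $S_c$, and for $c<1$ the theorem's displayed formula is only correct if $B_c$ is read as the bounded Euclidean ball of \eqref{Apo-formula} (the convex hull of $S_c$) rather than as the Apollonian ball $B_{e_1,-e_1}^c$ defined in Section 2; the analogous remark applies to $B_d$ in case (3). So you must either state explicitly that your repackaging yields $B_{-e_1,e_1}^{1/c}$ and reconcile this with the theorem's notation, or adopt the Euclidean-ball reading of $B_c,B_d$ from the outset. Your derivation otherwise establishes the correct set identity and the disconnectedness claim; only the identification with the theorem's three displayed cases is left genuinely unresolved.
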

\begin{proof}
   By definition $\alpha_G(x,y) =r$ is equivalent to
  \begin{equation}\label{apocircles}
    c |y-e_1| = |y+e_1| \quad \textrm{or} \quad d |y+e_1| = |y-e_1|.
  \end{equation}
  Equalities \eqref{apocircles} determine Apollonian spheres with respect to points $e_1$ and $-e_1$ and by \eqref{Apo-formula} the Apollonian spheres are $S_{e_1,-e_1}^c$ and $S_{-e_1,e_1}^d$.
%  \[
%    S_{e_1,-e_1}^c = S^{n-1} \left( e_1 \frac{1+c^2}{c^2-1},\frac{2c}{|c^2-1|} \right), S_{-e_1,e_1}^d = S^{n-1} \left( e_1 \frac{1+d^2}{1-d^2},\frac{2d}{|1-d^2|} \right).
%  \]
  We denote $S_c = S_{e_1,-e_1}^c$ and $S_d = S_{-e_1,e_1}^d$. By \eqref{Apo=0} we see that $S_a = S_{e_1,-e_1}^a$%  = S^{n-1}(e_1 (1+a^2)/(a^2-1),2a/|a^2-1|)$
, where $a = |x+e_1|/|x-e_1|$, is contained in $B_\alpha(x,r)$. Note that all the spheres $S_a$, $S_c$ and $S_d = S_{-e_1,e_1}^d =S_{e_1,-e_1}^{1/d}$ are Apollonian spheres and $1/d < a < c$. Since $S_a \subset B_\alpha(x,r)$ and $\partial B_\alpha (x,r) = S_c \cap S_d$, the complement of $B_\alpha(x,r)$ is disconnected. We denote the convex hull of $S_a$ by $B_a$. %We denote the convex hull of $S_i$ by $B_i$ for $i \in \{ a,c,d \}.$

  Let us assume that $c<1$ and $d \ge 1$. Now also $a > 1$. Because
  \[
    \frac{1+c^2}{c^2-1}+\frac{2c}{c^2-1} < \frac{1+a^2}{a^2-1} + \frac{2a}{a^2-1}
  \]
  is equivalent to $(a e^r+1)/(a e^r-1) < (a+1)/(a-1)$ and
  \[
    \frac{1+c^2}{c^2-1}-\frac{2c}{c^2-1} > \frac{1+a^2}{a^2-1} - \frac{2a}{a^2-1}
  \]
  is equivalent to $(a e^r-1)/(a e^r+1) < (a-1)/(a+1)$, it is clear that $S_c \subset S_a$. A similar argument shows that $B_a \subset B_d$.

  Let us then assume that $c>1$ and $d>1$. It is easy to verify that $B_d \subset (-\infty,0) \times \R^{n-1}$ and $B_c \subset (0,\infty) \times \R^{n-1}$. Thus we have $B_c \cap B_d = \emptyset$. Since $S_a$, $S_c$ and $S_d$ are Apollonian spheres with $1/d < a < c$ it is clear that $S_a \cap (B_c \cup B_d) = \emptyset$.

  The case  $c \ge 1$ and $d < 1$ is proved similarly to the case  $c<1$ and $d \ge 1$.
\end{proof}

Examples of Theroem \ref{Apo:twopoints} in twice punctured plane are represented in Figure \ref{Apo:kuva1}.

\begin{figure}[ht]
\begin{center}
      \includegraphics[height=.3\textwidth]{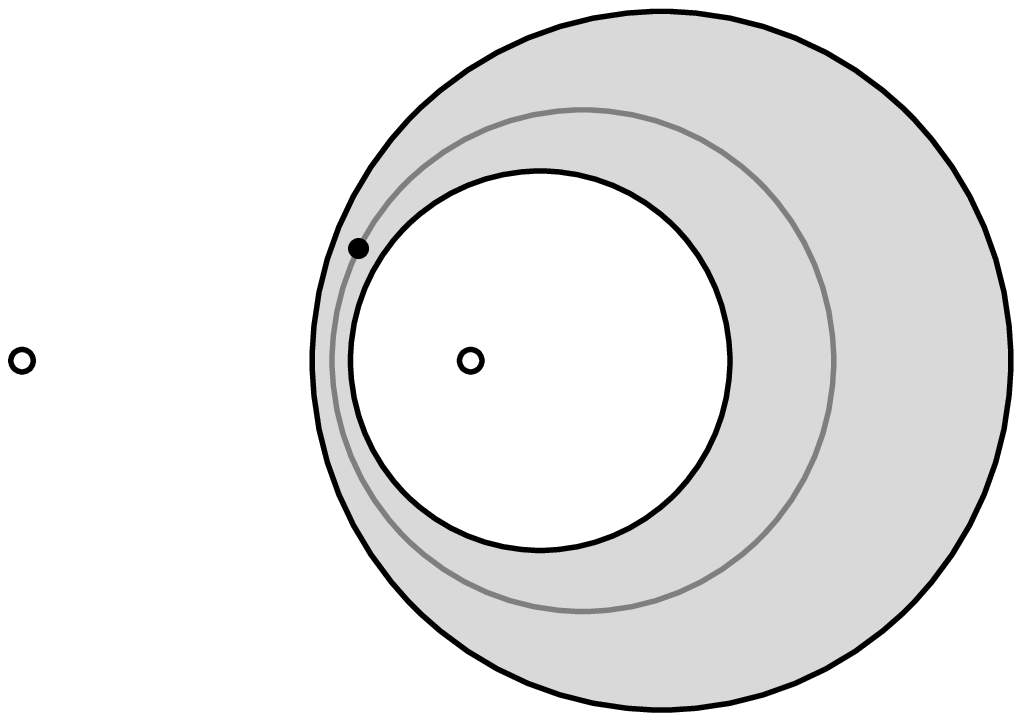}\hspace{5mm}
      \includegraphics[height=.3\textwidth]{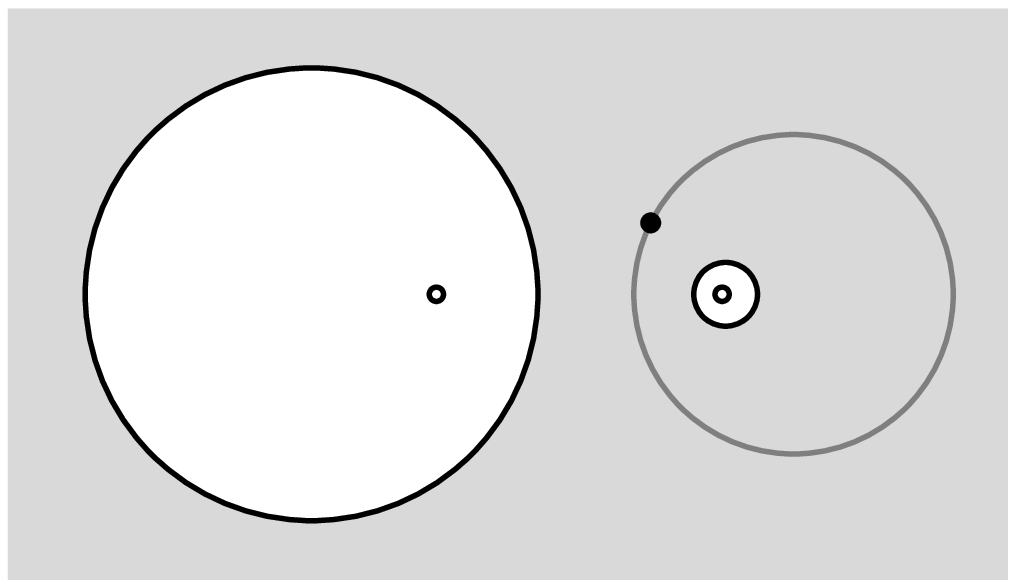}
    \end{center}
    \caption{Disks $B_\alpha(x,r)$ in the domain $\R^2 \setminus \{ 1,-1 \}$ with $x=(1+i)/2$, $r = 1/5$ (on the left) and $r=7/5$ (on the right). The black dot is $x$, the gray circle is the set defined by \eqref{Apo=0} and the small black circles are $1$ and $-1$.\label{Apo:kuva1}}
\end{figure}

\begin{remark}\label{Apo:remark} (1) Theorem \ref{Apo:twopoints} can be generalized for any twice punctured space:

  Let $y,z \in \Rn$ with $y \neq z$, $G = \Rn \setminus \{ y,z \}$, $x \in G$ and $r > 0$. We denote
  \[
    B_c = B_{y,z}^c, \quad B_d = B_{z,y}^d
%    B_c = B^n \left( \frac{z-y c^2}{1-c^2},\frac{|y-z|c}{|1-c^2|} \right), \quad B_d = B^n \left( \frac{y-z d^2}{1-d^2},\frac{|y-z| d}{|1-d^2|} \right)
  \]
  for $c = e^r |x-z|/|x-y|$ and $d = e^r |x-y|/|x-z|$. Then
  \[
    B_\alpha (x,r) = \left\{ \begin{array}{ll}B_c \setminus \overline{B_d}, & \textrm{if } c<1 \textrm{ and } d \ge 1,\\ \Rn \setminus ( \overline{B_c} \cup \overline{B_d} ), & \textrm{if } c>1 \textrm{ and } d > 1,\\ B_d \setminus \overline{B_c}, & \textrm{if } c \ge 1 \textrm{ and } d < 1. \end{array} \right.
  \]
  Moreover, the complement of $B_\alpha(x,r)$ is always disconnected.

  (2) By Theorem \ref{ApoSeit:intersection} and (1) we can find a formula for the Apollonian metric balls in the domain $\Rn \setminus G$, where $G = \{ x_1, \dots , x_m \}$ with $m \ge 2$ and $x_1 \neq x_2$.
\end{remark}

We consider next Apollonian metric balls in starlike domains $G \subsetneq \Rn$. In convex domains the Apollonian distance is always a metric.

\begin{theorem}\label{Apo:starlike}
  Let $G \subsetneq \Rn$ be a starlike domain with respect to $x \in G$ such that the complement of $G$ is not contained in any $(n-1)$-dimensional sphere and $r > 0$. Then $B_\alpha(x,r)$ is strictly starlike with respect to $x$.
\end{theorem}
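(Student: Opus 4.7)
My plan is to show that along every ray from $x$ inside $G$, the function $t \mapsto \alpha_G(x, x+tu)$ is strictly increasing; combined with continuity and blow-up at $\partial G$, this forces each ray from $x$ to meet $\partial B_\alpha(x,r)$ at exactly one point, giving strict starlikeness.

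The main tool is the Euclidean dilation $\phi_s \colon w \mapsto x + s(w-x)$, $s \in (0,1]$. Starlikeness of $G$ yields $\phi_s(G) \subseteq G$, and a direct computation from the defining inequality $c|x-w| < |y-w|$ of the Apollonian ball $B_{x,y}^c$ (both sides scale by $1/s$ under $w \mapsto \phi_s(w)$) shows that $\phi_s$ maps $B_{x,y}^c$ bijectively onto $B_{x,\phi_s(y)}^c$, and similarly $B_{y,x}^d$ onto $B_{\phi_s(y),x}^d$. Using the interpretation $\alpha_G(x,y) = \log c(y) + \log d(y)$ from \eqref{ApollonianBallsAproach}, where $B_{x,y}^{c(y)}$ and $B_{y,x}^{d(y)}$ are the maximal Apollonian balls at $x$ and $y$ contained in $G$, this scaling immediately gives $c(\phi_s(y)) \leq c(y)$ and $d(\phi_s(y)) \leq d(y)$, hence $\alpha_G(x, \phi_s(y)) \leq \alpha_G(x, y)$. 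This already yields starlikeness of $B_\alpha(x,r)$ with respect to $x$.

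For strict monotonicity I would argue by contradiction. If $\alpha_G(x, \phi_s(y)) = \alpha_G(x, y)$ for some $s \in (0,1)$, then both $c$ and $d$ must remain constant under $\phi_{s'}$ for $s' \in [s,1]$ (since each is non-decreasing and their sum is fixed). Hence $\partial G$ meets each Apollonian sphere $S_{x, \phi_{s'}(y)}^{c(y)}$ and $S_{\phi_{s'}(y), x}^{d(y)}$ for every $s' \in [s,1]$. As $s'$ varies, these are one-parameter families of pairwise disjoint Apollonian spheres (dilates of one another under $\phi_{s'}$ by formula \eqref{Apo-formula}). A compactness/continuity argument applied to maximizers $a_{s'}, b_{s'} \in \partial G$ on these spheres, together with the hypothesis that $\partial G$ is not contained in any $(n-1)$-sphere, should produce the required contradiction.

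Finally, I would use that $g(t) := \alpha_G(x, x+tu)$ is continuous on $[0,L)$ (where $L = \sup\{t \geq 0 : x+tu \in G\}$), $g(0)=0$, and $g(t) \to \infty$ as $t \to L^-$ (if $L < \infty$, take $b = x + Lu \in \partial G$ in the definition of $d$, giving $d(x+tu) \geq L/(L-t) \to \infty$; if $L = \infty$, the starlikeness plus the non-sphere condition forces $\partial G$ to be unbounded in a way making $g$ unbounded on the ray). Together with strict monotonicity, this forces $g(t) = r$ to have a unique solution, so the ray meets $\partial B_\alpha(x,r)$ at exactly one point. I expect the main obstacle to be the rigidity step for strict monotonicity: the weak inequality from the dilation is clean, but excluding equality requires a careful use of the assumption that $\partial G$ is not contained in any $(n-1)$-dimensional sphere.
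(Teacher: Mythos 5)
Your dilation argument for the non-strict statement is correct, and it is essentially the paper's ice-cream-cone lemma in disguise: since $\phi_s(G)\subseteq G$ and $\phi_s$ carries the extremal Apollonian balls $B_{x,y}^{c}$ and $B_{y,x}^{d}$ onto $B_{x,\phi_s(y)}^{c}$ and $B_{\phi_s(y),x}^{d}$, both factors in \eqref{ApollonianBallsAproach} are nondecreasing along rays from $x$, which yields starlikeness of $B_\alpha(x,r)$. The genuine gap is the strictness step, and it is not merely unfinished (``should produce the required contradiction''): the claim it rests on is false. The function $t\mapsto\alpha_G(x,x+tu)$ need not be strictly increasing under the stated hypotheses. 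Take $n=2$, $x=0$, $u=e_1$ and
\[
  G=\R^2\setminus\bigl(\{(s,s)\colon s\ge1\}\cup\{(s,-s)\colon s\ge1\}\bigr).
\]
This $G$ is starlike with respect to $0$ (if $tp$ lies on a removed ray for some $t\in(0,1)$, then so does $p$, since each removed ray lies on a line through the origin and points away from it), and its complement is contained in no circle or line; yet for $t\in[1,2]$ one computes $\sup_{a\in\partial G}|a-te_1|/|a|=1$ (approached at infinity) and $\sup_{b\in\partial G}|b|/|b-te_1|=\sqrt2$ (attained at $b=(t,\pm t)$), so $\alpha_G(0,te_1)=\tfrac12\log 2$ is constant on $[1,2]$. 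No compactness argument can rescue strict monotonicity, so your reduction ``strictly increasing $\Rightarrow$ one boundary point per ray'' cannot be run. (The ball $B_\alpha(0,\tfrac12\log2)$ is in fact still strictly starlike here, because only the first point of the level segment lies in the closure of the ball; seeing this requires controlling $\alpha_G(x,\cdot)$ off the ray, not just along it, which is what the paper's contradiction via Lemma \ref{Apo:icecreamcone} is doing.)

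Two smaller defects in the sketch. First, the spheres $S_{\phi_{s'}(y),x}^{d}$ for varying $s'$ are not pairwise disjoint: by \eqref{Apo-formula} they are all inscribed in a common cone with vertex $x$, and nearby members of the family intersect, so the disjointness you invoke fails for precisely the family you would need it for. Second, your blow-up argument as $t\to L^-$ is fine for $L<\infty$, but for $L=\infty$ the clean justification is that $\infty\in\partial G$ for unbounded $G$, so the $b$-factor is at least $1$ while the $a$-factor tends to infinity along the ray; the appeal to the non-sphere condition there is a red herring.
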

\begin{proof}
  Let us assume that $B_\alpha(x,r)$ is not starlike with respect to $x$. Then there exists $y,z \in G$ such that $y$ is contained in the line segment $(x,z)$, $\alpha_G(x,z) < r$ and $\alpha_G(x,y) =r' \ge r$. Now $B_{x,y}^{r'} \subset G$ and $S_{x,z}^r$ contains a point on $\partial G$. By Lemma \ref{Apo:icecreamcone} this is a contradiction.
\end{proof}

\begin{proof}[Proof of Theorem \ref{mainthm}]
  The assertion follows from Theorem \ref{Apo:twopoints}, Remark \ref{Apo:remark} (1) and Theorem \ref{Apo:starlike}.
\end{proof}

\begin{openprob}

  (1) If $G \subsetneq \Rn$ is a convex domain and $x \in G$, is $B_\alpha(x,r)$ convex for all $r>0$?

  (2) Let $G=\Bn \setminus \{ 0 \}$ and $x \in G$. Does there exists $r_0 = r_0(|x|) > 0$ such that $B_\alpha(x,r)$ is convex for all $r \in (0,r_0]$?
\end{openprob}

%===============================================================================
\section{Balls in Seittenranta's metric}\label{seit}

We consider next Seittenranta's distance in the domain $G = \Rn \setminus \{ -e_1,e_1 \}$.

\begin{lemma}\label{Seit:twicePP}
  Let $G = \Rn \setminus \{ -e_1,e_1 \}$, $x \in G$ and $r > 0$. Then for $B_c = B_{-e_1,x}^c$ and $B_d = B_{e_1,x}^d$ we have
  \[
    B_\delta (x,r) = \left\{ \begin{array}{ll} B_c \cap B_d, & \textrm{if } c \le 1 \textrm{ and } d \le 1,\\
    B_c \setminus \overline{B_d}, & \textrm{if } c \le 1 \textrm{ and } d > 1,\\
    B_d \setminus \overline{B_c}, & \textrm{if } c > 1 \textrm{ and } d \le 1,\\
    \Rn \setminus ( \overline{B_c} \cup \overline{B_d}), & \textrm{if } c > 1 \textrm{ and } d > 1, \end{array} \right.
  \]
  where $c = |x-e_1|(e^r-1)/2$ and $d = |x+e_1|(e^r-1)/2$.
\end{lemma}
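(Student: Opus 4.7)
The plan is to exploit the fact that $\partial G = \{-e_1, e_1\}$ has only two points, so the supremum defining $\delta_G(x,y)$ reduces to a maximum over the two ordered pairs $(-e_1,e_1)$ and $(e_1,-e_1)$ (the sup is actually attained here, as noted after Example~\ref{knownresults}).

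First I would compute the two cross-ratios explicitly, obtaining
\[
  \delta_G(x,y) = \log\Bigl(1 + \max\Bigl\{\tfrac{2|x-y|}{|x+e_1|\,|y-e_1|},\ \tfrac{2|x-y|}{|x-e_1|\,|y+e_1|}\Bigr\}\Bigr).
\]
Then $y \in B_\delta(x,r)$ is equivalent to both of these fractions being strictly less than $e^r - 1$. Clearing denominators transforms these inequalities into
\[
  |y-x| < c\,|y+e_1| \quad \text{and} \quad |y-x| < d\,|y-e_1|,
\]
with $c$ and $d$ exactly as in the statement. Each inequality describes a region bounded by an Apollonian sphere, which by \eqref{Apo-formula} is either a Euclidean sphere or, at the critical value, a hyperplane.

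Next I would carry out the case analysis. The geometric shape of the region $\{y: |y-x| < c\,|y+e_1|\}$ depends on whether $c < 1$ (a Euclidean ball containing $x$ but not $-e_1$), $c = 1$ (a half-space containing $x$), or $c > 1$ (the exterior of a Euclidean ball, now containing neighbourhoods of infinity and of $-e_1$); the same trichotomy applies to the second inequality with $c$ replaced by $d$ and $-e_1$ by $e_1$. Pairing up the two trichotomies and intersecting the resulting regions yields, in each of the four combinations listed, exactly the piecewise description in the statement, with the symbols $B_c$ and $B_d$ (or their complements $\Rn \setminus \overline{B_c}$, etc.) recording which side of the Apollonian sphere the inequality selects.

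The main obstacle is not the algebra but the bookkeeping in the case split: in each regime one must verify which of the two Euclidean objects (a Euclidean ball versus its exterior) is the correct solution set of the corresponding inequality, and confirm that the boundary cases $c = 1$ or $d = 1$ glue smoothly into the ``$\le 1$'' branches of the statement. The underlying computation is a direct application of \eqref{Apo-formula}, closely paralleling the once-punctured formula recorded in Example~\ref{knownresults}(2), with the only new feature being the presence of a second Apollonian constraint.
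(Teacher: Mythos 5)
Your approach is essentially the paper's: reduce the supremum to the two ordered pairs of finite boundary points, turn $\delta_G(x,y)<r$ into the pair of inequalities $|x-y|<c\,|y+e_1|$ and $|x-y|<d\,|y-e_1|$ with exactly the stated $c$ and $d$, and read off from \eqref{Apo-formula} which side of each Apollonian sphere these inequalities select, splitting on $c,d$ versus $1$. One concrete slip in your trichotomy: for $c>1$ the region $\{y:|x-y|<c|y+e_1|\}$ does \emph{not} contain a neighbourhood of $-e_1$ --- the inequality fails identically at $y=-e_1$ for every $c>0$ --- rather it is the exterior of a Euclidean ball whose \emph{interior} contains $-e_1$. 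That is precisely what $\Rn\setminus(\overline{B_c}\cup\overline{B_d})$ in the statement records, so the error does not corrupt your final identification, but the sentence needs fixing. (Relatedly, the literal definition $B_{-e_1,x}^c=\{z:c|z+e_1|<|z-x|\}$ never contains $x$ and coincides with the Euclidean ball bounded by $S_{-e_1,x}^c$ only when $c>1$; your reading of $B_c$ as ``the side of the sphere the inequality selects'' is the sensible one and matches what the paper's proof actually uses.)

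A second point, which you share with the paper's own proof rather than improve on: $G$ is unbounded, so $\infty\in\partial G$ in $\overline{\Rn}$, and Example \ref{knownresults}(2) (where $\delta_{\Rn\setminus\{a\}}=j_{\Rn\setminus\{a\}}$) holds only because the pairs involving $\infty$ are included in the supremum. If they are included here as well, four additional terms such as $|x-y|/|y-e_1|$ and $|x-y|/|x-e_1|$ enter the maximum, and for $y$ near $x$ these exceed both two-point terms whenever $\max\{|x-e_1|,|x+e_1|\}>2$; the displayed two-term formula for $\delta_G$, and hence the case analysis, is then valid only after restricting attention to the finite boundary points. You should either state explicitly that you take $\partial G=\{-e_1,e_1\}$, or add an argument that the $\infty$-terms never realize the maximum --- which is false in general --- so the justification ``the sup is attained'' does not by itself license dropping them.
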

\begin{proof}
  Let us denote $\beta = |x-e_1|/|x+e_1|$. Since $|x-e_1||y+e_1| < |x+e_1||y-e_1|$ is equivalent to $y \in B_{-e_1,e_1}^\beta$ we have by definition
  \[
    \delta(x,y) = \left\{ \begin{array}{ll} \displaystyle \log \left( 1+\frac{2|x-y|}{|x-e_1||y+e_1|} \right) , & \textrm{if } y \in B_{-e_1,e_1}^\beta,\\
    \displaystyle  \log \left( 1+\frac{2|x-y|}{|x+e_1||y-e_1|} \right), & \textrm{if } y \in B_{e_1,-e_1}^{1/\beta}. \end{array} \right.
  \]
  For $y \in B_{-e_1,e_1}^\beta$ the equality $\delta_G(x,y) = r$ is equivalent to $y \in S_{-e_1,x}^c$. Similarly, for $y \in B_{e_1,-e_1}^{1/\beta}$ the equality $\delta_G(x,y) = r$ is equivalent to $y \in S_{e_1,x}^d$. Therefore it is clear that $\partial B_\delta (x,r) \subset S_{-e_1,x}^c \cup S_{e_1,x}^d$.

  By \eqref{Apo-formula} we can see that $c \le 1$ is equivalent to $x \in B_c$ and $c > 1$ is equivalent to $x \notin \overline{B_c}$. Similarly we observe that $d \le 1$ is equivalent to $x \in B_d$ and $d > 1$ is equivalent to $x \notin \overline{B_d}$. Since always $x \in B_\delta (x,r)$, the above observations imply the assertion.
\end{proof}

Examples of Lemma \ref{Seit:twicePP} in twice punctured plane are represented in Figure \ref{Seit:kuva1}.

\begin{figure}[ht]
\begin{center}
      \includegraphics[height=.3\textwidth]{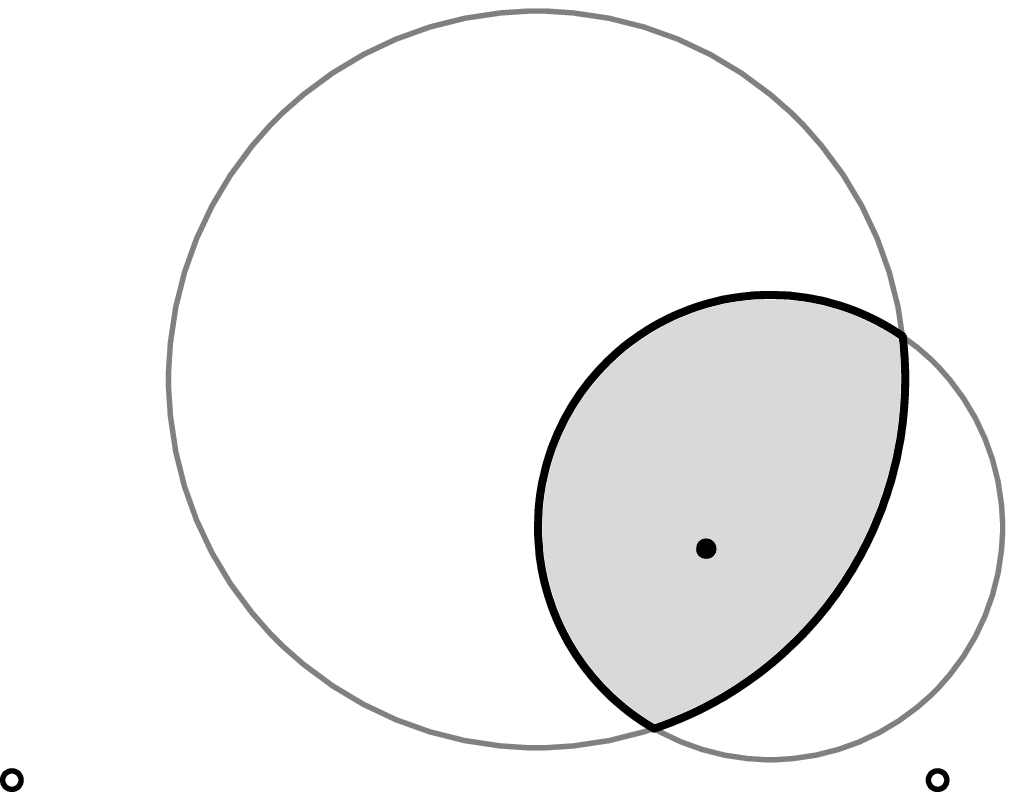}\hspace{5mm}
      \includegraphics[height=.3\textwidth]{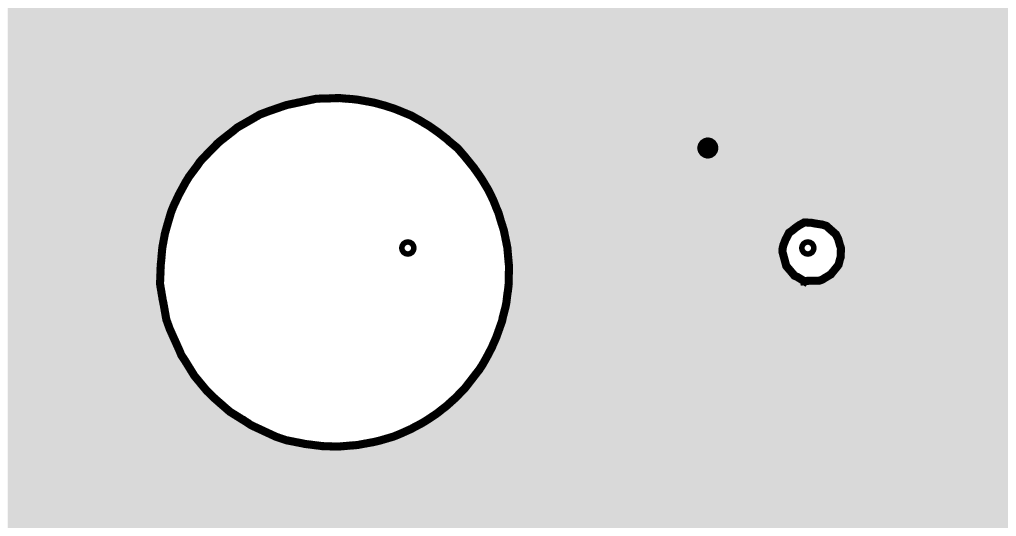}
    \end{center}
    \caption{Disks $B_\delta(x,r)$ in the domain $\R^2 \setminus \{ 1,-1 \}$ with $x=(1+i)/2$, $r = 3/5$ (on the left) and $r=2$ (on the right). The black dot is $x$, the gray circles are $\partial B_c$ and $\partial B_d$ as defined in Lemma \ref{Seit:twicePP} and the small black circles are $1$ and $-1$.\label{Seit:kuva1}}
\end{figure}

\begin{theorem}\label{Seit:convexity}
  Let $G = \Rn \setminus \{ -e_1,e_1 \}$, $x \in G$ and $r_0 = \log(1+2/\max \{ |x-e_1| , |x+e_1| \})$. Then $B_\delta (x,r)$ is convex for all $r \in (0,r_0]$ and is not convex for $r > r_0$.
\end{theorem}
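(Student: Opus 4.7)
My plan is to separate the proof into the convexity claim for $r \le r_0$ and the non-convexity claim for $r > r_0$, both driven by the explicit description of $B_\delta(x,r)$ in Lemma \ref{Seit:twicePP}.

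For convexity I would simply observe that the inequality $r \le r_0$ is equivalent to the pair of inequalities $c \le 1$ and $d \le 1$, which places us in the first case of Lemma \ref{Seit:twicePP} with $B_\delta(x,r) = B_c \cap B_d$. Each of $B_c, B_d$ is an open Euclidean ball when the relevant parameter is strictly less than $1$ and degenerates to an open half-space when the parameter equals $1$; in either situation the set is convex, and therefore so is their intersection.

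For non-convexity, by the reflection swapping $\pm e_1$ I reduce to $|x-e_1| \ge |x+e_1|$, so that $r_0 = \log(1+2/|x-e_1|)$ and $r > r_0$ forces $c > 1$. Lemma \ref{Seit:twicePP} then puts us in case~3 ($d \le 1$, $B_\delta = B_d \setminus \overline{B_c}$) or case~4 ($d > 1$, $B_\delta = \Rn \setminus (\overline{B_c} \cup \overline{B_d})$). Case~4 is immediate: the complement of $B_\delta$ is the bounded set $\overline{B_c} \cup \overline{B_d}$, which has non-empty interior, so any line through $-e_1 \in \mathrm{int}(\overline{B_c})$ yields two points of $B_\delta$ on opposite sides of the obstruction whose connecting segment crosses $\overline{B_c}$. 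For case~3 I would exhibit the explicit point $q^* = -e_1 + (x+e_1)/(c+1)$ on the segment $[-e_1,x]$, check directly that $q^* \in S_c$, and verify $q^* \in B_d$ via the elementary inequality $|x - e_1| < |x - (2c+1)e_1|$, which after squaring reduces to $4c(c+1-x_1) > 0$ and is immediate from the WLOG condition $x_1 \le 0$ together with $c > 1$. Since $S_c \cap B_d$ is then a non-empty open subset of the $(n-1)$-sphere $S_c$, I can pick two distinct points $q_1, q_2 \in S_c \cap B_d$; their mid-chord lies strictly inside the convex Euclidean ball $\overline{B_c}$, so perturbing $q_1, q_2$ slightly along the outer normal of $S_c$ into $B_\delta$ yields two points of $B_\delta$ whose connecting segment still meets $\overline{B_c}$, proving non-convexity.

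The step that I expect to need the most care is the verification $q^* \in B_d$ uniformly throughout case~3. The reason this is delicate is that $\delta_G(x,\cdot)$ is a supremum of two cross-ratios, and on $S_c$ the one indexed by the pair $(a,b) = (e_1, -e_1)$ is identically $e^r - 1$ by definition of $c$; so the membership $q^* \in B_d$ is equivalent to the other cross-ratio being strictly smaller at $q^*$, which is precisely the displayed inequality. Once this identity/inequality is in hand, the geometric construction above applies at every $r > r_0$ in case~3 without further obstruction.
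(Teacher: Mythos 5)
Your proof is correct and takes essentially the same route as the paper: both arguments reduce everything to Lemma \ref{Seit:twicePP} and the elementary equivalence of ``$c\le 1$ and $d\le 1$'' with $r\le r_0$. The only difference is that the paper simply asserts the ``convex if and only if $c\le1$ and $d\le1$'' dichotomy, while you explicitly verify the non-convexity in the cases $c>1$ or $d>1$ (via the point $q^*\in S_c\cap B_d$ and the perturbation of a chord); that verification is correct and fills in the one step the paper leaves implicit.
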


\begin{proof}
  By Lemma \ref{Seit:twicePP} the metric ball $B_\delta (x,r)$ is convex if and only $c \le 1$ and $d \le 1$, which is equivalent to
  \[
    r \le \min \left\{ \log \left( 1+ \frac{2}{|x-e_1|} \right) , \log \left( 1+ \frac{2}{|x+e_1|} \right) \right\} = \log \left( 1+\frac{2}{\max \{ |x-e_1|,|x+e_1| \}} \right)
  \]
  and the assertion follows.
\end{proof}

\begin{remark}\label{Seit:huomautus}
  (1) Theorem \ref{Seit:convexity} is true for any domain $G = \Rn \setminus \{ y,z \}$ with $y,z \in \Rn$ and $a \neq b$, if we replace $r_0$ by
  \[
    r _1 = \log \left( 1+\frac{|y-z|}{\max \{ |x-y|,|x-z| \} } \right).
  \]

  (2) In Theorem \ref{Seit:convexity} (and the above generalization) the radius $r_0$ ($r_1$) is sharp in the sense that for $r \in (0,r_0)$ ($r \in (0,r_1)$) the metric balls $B_\delta (x,r)$ are strictly convex.

  (3) Note that $B_\delta (x,r)$ is not starlike for $r > r_0 (r_1)$ in Theorem \ref{Seit:convexity} (in the above remark (2) ).
\end{remark}

\begin{corollary}\label{Seit:Rn-finiteset}
  Let $G = \Rn \setminus \{ x_1, \dots ,x_m \}$, $m \ge 2$ and $x_1 \neq x _2$, $x \in G$ and $r > 0$. Then $B_\alpha(x,r)$ is convex, if
  \[
    r \le \min_{i \neq j} \left\{ \log \left( 1+\frac{|x_i-x_j|}{\max \{ |x-x_i|,|x-x_j| \} } \right) \right\} =  \log \left( 1+\frac{\displaystyle \min_{i \neq j} \left\{ |x_i-x_j| \right\}}{\displaystyle \max_{i} \{ |x-x_i| \} } \right).
  \]
\end{corollary}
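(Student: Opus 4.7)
The plan is to reduce the $m$-punctured setting to the twice-punctured case already handled in Remark~\ref{Seit:huomautus}, and then verify that the single uniform bound displayed in the statement forces the pairwise convexity condition obtained there. (The ball in question is Seittenranta's $B_\delta(x,r)$; given the section and the machinery available, the ``$B_\alpha$'' in the statement reads as a typographical slip.)

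First I would apply Theorem~\ref{ApoSeit:intersection} in the Seittenranta case to write
\[
  B_\delta(x,r) \;=\; \bigcap_{i\neq j}\; B_{\delta_{\Rn\setminus\{x_i,x_j\}}}(x,r).
\]
By the generalization of Theorem~\ref{Seit:convexity} recorded in Remark~\ref{Seit:huomautus}~(1), each factor on the right is convex whenever
\[
  r \;\le\; \log\!\left(1+\frac{|x_i-x_j|}{\max\{|x-x_i|,|x-x_j|\}}\right).
\]
Since an intersection of convex sets is convex, it suffices to check that the hypothesis of the corollary implies this pairwise bound simultaneously for every $i\neq j$.

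That final step is pure monotonicity. Write $\mu:=\min_{i\neq j}|x_i-x_j|$ and $M:=\max_i|x-x_i|$. For any pair $i\neq j$ one has $|x_i-x_j|\ge \mu$ and $\max\{|x-x_i|,|x-x_j|\}\le M$, hence
\[
  \frac{|x_i-x_j|}{\max\{|x-x_i|,|x-x_j|\}} \;\ge\; \frac{\mu}{M},
\]
and since $t\mapsto\log(1+t)$ is increasing, the assumed bound $r\le \log(1+\mu/M)$ forces the pairwise bound for every $(i,j)$. This is essentially all there is to it: the reduction itself is automatic, and no step presents a real obstacle. One minor caveat is that the ``equality'' written in the statement between $\min_{i\neq j}\log(1+|x_i-x_j|/\max\{|x-x_i|,|x-x_j|\})$ and $\log(1+\mu/M)$ is in fact only a $\ge$ in general (it can be strict, as one sees already on the line with three clustered punctures viewed from far away), but that weak direction is exactly what the sufficient condition requires, so the argument goes through unchanged.
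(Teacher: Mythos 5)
Your proof is correct and follows essentially the same route as the paper: decompose $B_\delta(x,r)$ via Theorem~\ref{ApoSeit:intersection} into an intersection over pairs of punctures, apply the twice-punctured convexity criterion of Remark~\ref{Seit:huomautus}~(1) to each factor, and conclude since an intersection of convex sets is convex. Your two side remarks are also accurate: the ``$B_\alpha$'' in the statement should read $B_\delta$, and the displayed ``equality'' is in general only a ``$\ge$'' --- which is harmless here and consistent with the paper's own observation that this radius is not sharp.
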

\begin{proof}
  The assertion follows from the fact that intersection of convex domains is convex, Theorem \ref{ApoSeit:intersection} and Remark \ref{Seit:huomautus} (1).
\end{proof}

Note that the radii $r_0$ in Theorem \ref{Seit:convexity} and $r_1$ in Remark \ref{Seit:huomautus} (1) are sharp, but the radius in Corollary \ref{Seit:Rn-finiteset} is not sharp in general. An example of Corollary \ref{Seit:Rn-finiteset} is represented in Figure \ref{Seit:kuva3}.

\begin{lemma}\label{Seit:specialcase}
  Let $x \in \Bn \setminus \{ 0 \}$ and $r >0$. Then the set
  \[
    A = \left\{ y \in \Bn \setminus \{ 0 \} \colon \log \left( 1 +\frac{|x-y|}{|y| (1-|x|)} \right) < r \right\}
  \]
  is convex for $r \in (0,\log(1+1/(1-|x|))]$ and not convex for $r > \log(1+1/(1-|x|))$, and the set
  \[
    B = \left\{ y \in \Bn \setminus \{ 0 \} \colon \log \left( 1 +\frac{|x-y|}{|x| (1-|y|)} \right) < r \right\}
  \]
  is strictly convex.
\end{lemma}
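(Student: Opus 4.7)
The plan is to unfold both conditions geometrically. Setting $s=(e^{r}-1)(1-|x|)$, the inequality defining $A$ becomes $|x-y|<s|y|$, so $A=B_{x,0}^{1/s}\cap(\Bn\setminus\{0\})$; setting $t=(e^{r}-1)|x|$, the inequality defining $B$ becomes $|x-y|+t|y|<t$, the sublevel set of a sum of two Euclidean norms.

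For the set $A$, the threshold $s=1$ corresponds to $r=\log(1+1/(1-|x|))$. When $s\le 1$ we have $1/s\ge 1$, so by \eqref{Apo-formula} the Apollonian ball $B_{x,0}^{1/s}$ is either a half-space (if $s=1$) or the interior of a Euclidean ball (if $s<1$); in either case it is convex, and since one checks directly that $0\notin B_{x,0}^{1/s}$, the set $A=B_{x,0}^{1/s}\cap\Bn$ is an intersection of two convex sets, hence convex. When $s>1$, \eqref{Apo-formula} identifies $B_{x,0}^{1/s}$ as the exterior of the closed Euclidean ball $\overline{E}$ with centre $-x/(s^{2}-1)$ and radius $s|x|/(s^{2}-1)$; a direct computation yields $0\in\mathrm{int}\,E$ and $x\notin\overline{E}$, so $A=\Bn\setminus\overline{E}$. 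To deduce nonconvexity in this case I would note that the segment $[0,x]\subset\Bn$ runs from $\mathrm{int}\,E$ to the exterior of $\overline{E}$ and therefore crosses $\partial E$ at some point $y^{*}\in\mathrm{int}\,\Bn$; picking two distinct points $y_{1},y_{2}\in\partial E$ close to $y^{*}$ (and hence inside $\Bn$) and pushing each slightly outward along the outward unit normal to $\partial E$ places $y_{1}^{\prime},y_{2}^{\prime}$ in $A$, while the perturbed segment $[y_{1}^{\prime},y_{2}^{\prime}]$ remains close to the chord $[y_{1},y_{2}]\subset\overline{E}$ and therefore still passes through $E$.

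For the set $B$, the function $f(y)=|x-y|+t|y|$ is convex as a sum of two convex norms, so $\{f<t\}$ and consequently $B$ are convex. For strict convexity it suffices to show $\partial B$ contains no line segment. On any line $\ell^{\prime}$ that misses at least one of the points $0,x$, one of $|x-y|,|y|$ is strictly convex along $\ell^{\prime}$ while the other is convex, so $f|_{\ell^{\prime}}$ is strictly convex and its level sets are finite. On the single line $\ell$ through $0$ and $x$, the parametrisation $y=\lambda x$ yields $f(\lambda x)=|x|(|1-\lambda|+t|\lambda|)$, which is piecewise affine in $\lambda$ with breakpoints $\lambda=0,1$ and slopes $-(1+t)|x|$, $(t-1)|x|$, $(1+t)|x|$ on the three pieces $\lambda\le 0$, $0\le\lambda\le 1$, $\lambda\ge 1$. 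The outer slopes never vanish, and the middle slope vanishes only when $t=1$, in which case the constant middle value is $|x|<1=t$; hence on each of the three pieces the equation $f=t$ has at most one solution, so $\partial B\cap\ell$ is finite and $\partial B$ contains no segment.

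The main obstacle is the nonconvexity argument for $A$ in the regime $s>1$: one must confirm that enough of the Apollonian sphere $\partial E$ sits inside $\mathrm{int}\,\Bn$ for the outward perturbation to produce genuine points of $A$, and this is supplied by $[0,x]\subset\Bn$ together with $0\in\mathrm{int}\,E$ and $x\notin\overline{E}$.
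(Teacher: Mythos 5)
Your argument is correct, and for the set $B$ it takes a genuinely different route from the paper. For $A$ you and the paper do the same thing --- identify $A$ with the Apollonian ball $B_{x,0}^{c}$, $c=1/s$, and read off convexity from \eqref{Apo-formula} --- but the paper simply asserts the equivalence ``convex iff $c\ge 1$''; your two-point perturbation on $\partial E$ inside $\Bn$, together with the observation that $0\notin B_{x,0}^{c}$ renders the puncture and the intersection with $\Bn$ harmless, supplies the nonconvexity half that the paper leaves implicit. For $B$ the paper instead parametrizes $\partial B$ in polar coordinates centred at $x$, solves the law of cosines for $|x-y|$ as a function of the angle $\beta$, and argues that this radial function is increasing on $[0,\pi]$; your reformulation of the defining inequality as $|x-y|+t|y|<t$ with $t=(e^{r}-1)|x|$ exhibits $B$ as a sublevel set of the convex function $f(y)=|x-y|+t|y|$, so convexity is immediate, and the line-restriction argument (strict convexity of $f$ on every line missing $0$ or $x$, plus the explicit piecewise-affine analysis on the line through $0$ and $x$) cleanly rules out segments in $\partial B$. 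This is shorter and arguably more robust, since monotonicity of a polar radius about $x$ does not by itself imply convexity. One shared blemish: when $r>\log 2$ one has $f(0)=|x|<t$, so $B=\{f<t\}\setminus\{0\}$ is literally not convex because of the deleted origin; neither you nor the paper addresses this, and it is harmless in the application in Theorem \ref{Seit:puncturedball} because the set $A$ there already omits a neighbourhood of $0$.
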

\begin{proof}
  By symmetry it is sufficient to consider only the case $n=2$.

  We prove first the claim for the set $A$. The relation $y \in A$ is equivalent to $\log ( 1 +(|x-y)/(|y| (1-|x|))  < r$, which is equivalent to
  \begin{equation}\label{Seit:Alemma}
    c |x-y| < |y|, \textrm{ where } c = \frac{1}{(e^r-1)(1-|x|)}.
  \end{equation}
  By \eqref{Seit:Alemma} the set $A = B_{x,0}^c$ and by \eqref{Apo-formula} it is convex if and only if $c \ge 1$. Because $c \ge 1$ is equivalent to $r \le \log (1+1/(1-|x|))$ the assertion for the set $A$ follows.

  We prove then the claim for the set $B$. Let $y \in \partial B$. The equality $\log(1+|x-y|/(|x|(1-|y|))) = r$ is equivalent to
  \begin{equation}\label{Siet:mody1}
    |y| = 1- \frac{|x-y|}{c},
  \end{equation}
  where $c = |x|(e^r-1)$. We denote $\beta = \measuredangle (e_1,x,y) \in [0,\pi]$. By the law of cosines we have
  \begin{equation}\label{Siet:mody2}
    |y|^2 = |x-y|^2+|x|^2-2|x-y| |x| \cos (\pi-\beta).
  \end{equation}
  By combining \eqref{Siet:mody1} and \eqref{Siet:mody2} we obtain
  \[
    |x-y| = \left\{ \begin{array}{ll} \displaystyle \frac{1-|x|^2}{2(1-|x|\cos\beta)}, & \textrm{if } c=1,\\ \displaystyle \frac{c \left( 1+c|x|\cos \beta - \sqrt{c^2+|x|^2-c^2 |x|^2 + c|x|(2+c|x| \cos \beta )\cos\beta} \right)}{1-c^2}, & \textrm{if } c \neq 1, \end{array} \right.
  \]
  and we denote $f(\beta) = |x-y|$, if $c=1$, and $g(\beta) = |x-y|$, if $c \neq 1$. We show that $f(\beta)$ and $g(\beta)$ are increasing, which implies that $B$ is strictly convex. 

  We obtain easily that
  \[
    f'(\beta) = \frac{|x|(1-|x|^2) \sin \beta}{2(1+|x| \cos \beta)^2} \ge 0
  \]
  and therefore $f(\beta)$ is increasing.

  By a straightforward computation we get
  \[
    g'(\beta) = c^2|x| \sin \beta \frac{h(\beta)}{c^2-1}, \quad h(\beta) = 1-\frac{1+c |x| \cos \beta}{\sqrt{|x|^2-c^2(|x|^2-1)+c|x| (2+c|x|\cos \beta) \cos \beta}}.
  \]
  Since $h(\beta) > (<) 0$ is equivalent to $c^2-1 > (<) 0$ we conclude that $g'(\beta) \ge 0$ and thus $g(\beta)$ is increasing.
\end{proof}

\begin{theorem}\label{Seit:puncturedball}
  Let $G = \Bn \setminus \{ 0 \}$, $x \in G$ and $r_0 = \log (1+1/(1-|x|))$. Then $B_\delta (x,r)$ is convex for all $r \in (r,r_0]$ and is not convex for $r > r_0$.
\end{theorem}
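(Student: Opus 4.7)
The plan is to decompose $\delta_G$ according to the two components of $\partial G = \{0\} \cup S^{n-1}$ and reduce convexity of $B_\delta(x,r)$ to Lemma~\ref{Seit:specialcase}. Splitting the supremum in the definition of $\delta_G(x,y)$ into the three contributing cases for the pair $(a,b)$: (i) both lie on $S^{n-1}$, contributing $\delta_\Bn(x,y) = \rho_\Bn(x,y)$ by Example~\ref{knownresults}(1); (ii) $a=0$ with $b \in S^{n-1}$, where the inner supremum is attained at $b = y/|y|$ and yields $\log(1 + |x-y|/(|x|(1-|y|)))$; (iii) symmetrically $b=0$ with $a \in S^{n-1}$ yields $\log(1 + |x-y|/((1-|x|)|y|))$. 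Taking the maximum of these three gives
\[
  B_\delta(x,r) = B_{\rho_\Bn}(x,r) \cap B \cap A,
\]
where $A$ and $B$ are the sets from Lemma~\ref{Seit:specialcase}.

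For $r \in (0, r_0]$ each factor is convex: $B_{\rho_\Bn}(x,r)$ is a Euclidean ball by Example~\ref{knownresults}(1), $B$ is strictly convex by Lemma~\ref{Seit:specialcase}, and $A$ is convex precisely because $r \le r_0$. Hence $B_\delta(x,r)$ is convex, as the intersection of convex sets.

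For $r > r_0$, Lemma~\ref{Seit:specialcase} gives $A = B_{x,0}^c$ with $c = 1/((e^r-1)(1-|x|)) < 1$, so the complement of $A$ in $\Rn$ is a closed Euclidean ball $\overline{B^*}$ containing $0$, and $\partial A$ meets the segment $[0,x]$ at $p = (c/(1+c))x$. I would pick $y_1, y_2 \in \partial A$ near $p$, symmetric with respect to the line through $0$ and $x$ (possible since $n \ge 2$); by strict convexity of $\overline{B^*}$ their midpoint lies strictly inside $\overline{B^*}$, hence outside $A$. A small outward perturbation of $y_1, y_2$ into the interior of $A$ places them inside $B_\delta(x,r)$ while keeping the midpoint outside $A$ by continuity, contradicting convexity.

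The main obstacle is to verify that the perturbation step is legitimate, i.e., that the $\rho_\Bn$- and $B$-constraints are both strict at the reference point $p$. A direct substitution gives the $B$-value at $p$ equal to $\log(1 + 1/(1 + c(1-|x|)))$, strictly less than $r = \log(1 + 1/(c(1-|x|)))$. For the hyperbolic constraint, using the one-dimensional hyperbolic distance formula along the radius through $x$ and simplifying yields $r - \rho_\Bn(x,p) = \log\bigl((1 + 2c + c^2(1-|x|^2))/(c(1+|x|) + c^2(1-|x|^2))\bigr)$, whose numerator exceeds its denominator by $1 + c(1-|x|) > 0$ for $c \in (0,1)$; thus $\rho_\Bn(x,p) < r$ throughout the relevant range.
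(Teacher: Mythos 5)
Your proposal is correct and follows essentially the same route as the paper: the same decomposition $B_\delta(x,r)=A\cap B\cap B_{\rho_{\Bn}}(x,r)$ via the three cases for $(a,b)\in\partial G\times\partial G$, convexity for $r\le r_0$ from convexity of each factor, and non-convexity for $r>r_0$ by showing the ball coincides with the non-convex set $A$ near the point of $\partial A$ on the segment from $x$ toward the origin. Your explicit verification that the $B$- and $\rho_{\Bn}$-constraints are strict at $p=cx/(1+c)$ checks out and in fact makes the paper's ``it is easy to see'' step fully precise.
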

\begin{proof}
  Let $y,z \in G$, $y \neq z$, and denote by $C$ the circle (or line, if $y$ and $z$ lie on the same diameter) that contains $y$ and $z$ and is perpendicular to $\partial \Bn$. Now $l = C \cap \Bn$ is the hyperbolic line with $y,z \in l$. We denote $\{ y^*,z^* \} = C \cap \partial \Bn$ and assume that $|y-y^*| < |z-y^*|$. Now we have
  \begin{eqnarray}
    \delta_G(y,z) & = & \max \left\{ \max_{a \in \partial \Bn} \log \left( 1+\frac{|a||y-z|}{|y-a||z|} \right) , \max_{b \in \partial \Bn} \log \left( 1+\frac{|b||y-z|}{|y||z-b|} \right) , \rho_\Bn (y,z) \right\} \nonumber\\
    & = & \log \left( 1+|y-z| \max \left\{ \frac{1}{|z|(1-|y|)} , \frac{1}{|y|(1-|z|)} ,\frac{|y^*-z^*|}{|y-y^*||z-z^*|} \right\} \right)\label{Seit:deltaestimate}
  \end{eqnarray}
  and therefore
  \[
    B_{\delta_G} (x,r) = A \cap B \cap C,
  \]
  where $A$ and $B$ as in the Lemma \ref{Seit:specialcase} and $C = B_{\delta_\Bn}(x,r)$. By Lemma \ref{Seit:specialcase} and Example \ref{knownresults} (1) both $B$ and $C$ are always convex. Since $A$ is convex for $r \in (0,r_0]$ by Lemma \ref{Seit:specialcase}, also $B_{\delta_G} (x,r)$ is convex as intersection of three convex domains.

  Finally, we show that the radius $r_0$ is sharp. We denote $y = \partial B_{\delta_G} (x,r) \cap l$, where $l$ is the line segment from $x$ to the origin. We show that for small $\varepsilon$ we have $B^n(y,\varepsilon) \cap B_{\delta_G} (x,r) = B^n(y,\varepsilon) \cap A$, which implies by Lemma \ref{Seit:specialcase} that $B_{\delta_G} (x,r)$ is not convex. We denote $r_A = \log (1+|x-y|/(|y|(1-|x|)))$, $r_B = \log (1+|x-y|/(|x|(1-|y|)))$ and $r_C = \delta_\Bn (x,y)$. We show that $r_A > \max \{ r_B , r_C \}$, which implies the sharpness of $r_0$. Inequality $r_A > r_B$ is equivalent to $|x| > |y|$, which is true by the selection of $y$. Because $r_C = \log(1+2|x-y|/((1+|y|)(1-|x|)))$ it is easy to se that $r_A > r_C$ is equivalent to $|y| < 1$, which is true as $y \in G$.
\end{proof}

An example of Theorem \ref{Seit:puncturedball} is represented in Figure \ref{Seit:kuva3}.

\begin{figure}[ht]
\begin{center}
      \includegraphics[height=.33\textwidth]{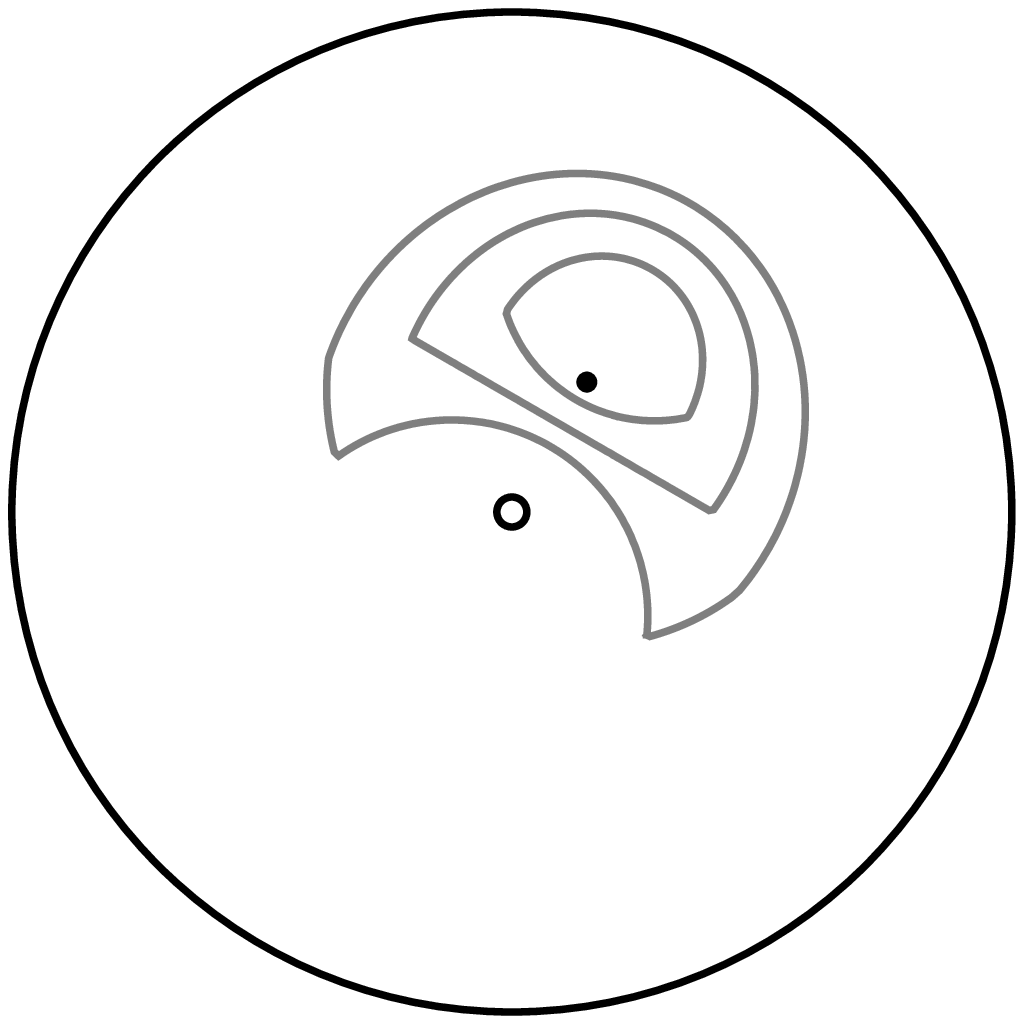}\hspace{2cm}
      \includegraphics[height=.33\textwidth]{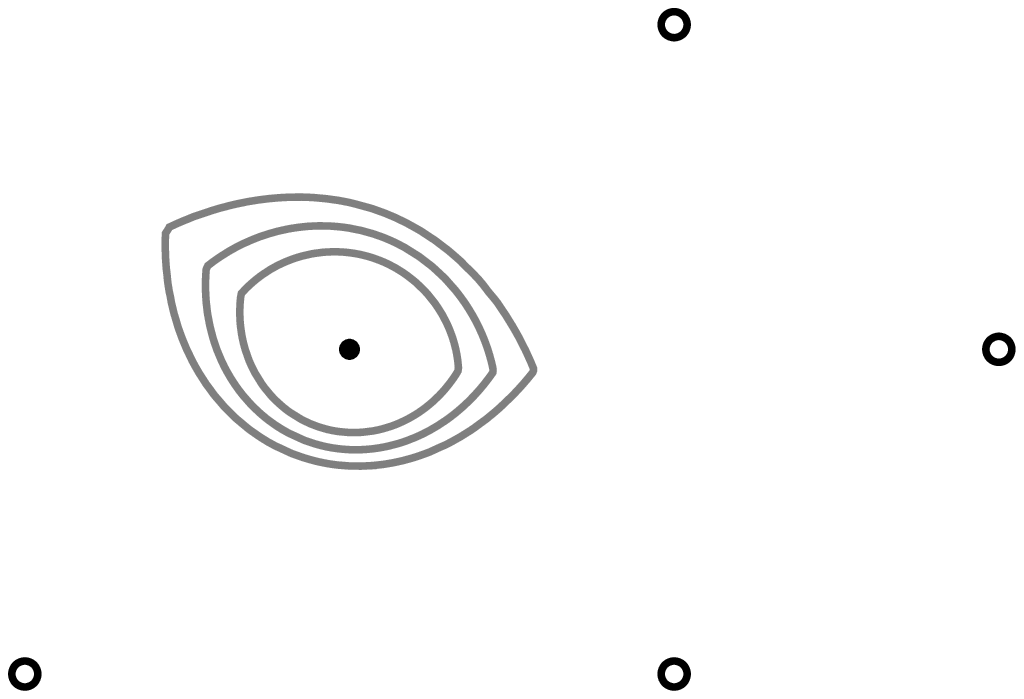}
    \end{center}
    \caption{Disks $B_\delta(x,r)$ of Seittenranta's metric in the domains $\B^2 \setminus \{ 0 \}$ (on the left) and $\R^2 \setminus \{ 1,-1,2+i,1+2i \}$ (on the right) with $r \in \{ r_0-1/3,r_0,r_0+1/3 \}$, where $r_0$ as in Corollary \ref{Seit:Rn-finiteset} on the left and $r_0$ as in Theorem \ref{Seit:puncturedball} on the right. In each figure the black circles form the boundary of the domain and the black dot is the point $x$.\label{Seit:kuva3}}
\end{figure}

\begin{proof}[Proof of Theorem \ref{mainthm2}]
  The assertion follows from Theorem \ref{Seit:puncturedball} and Corollary \ref{Seit:Rn-finiteset}.
\end{proof}

\begin{openprob}

(1) If $G \subsetneq \Rn$ is a convex domain and $x \in G$, is $B_\delta(x,r)$ convex for all $r>0$?

(2) If $G \subsetneq \Rn$ is starlike domain with respect to $x \in G$, is $B_\delta(x,r)$ starlike with respect to $x$ for all $r>0$?
\end{openprob}

{\bf Acknowledgements.} The author would like to thank the referee for useful comments and Matti Vuorinen for posing the open problem and his fruitful ideas.

%===============================================================================

\end{document}